\begin{document}
\newtheorem{theo}{Theorem}[section]
\newtheorem*{thm*}{Theorem A}
\newtheorem{defin}[theo]{Definition}
\newtheorem{rem}[theo]{Remark}
\newtheorem{lem}[theo]{Lemma}
\newtheorem{cor}[theo]{Corollary}
\newtheorem{prop}[theo]{Proposition}
\newtheorem{exa}[theo]{Example}
\newtheorem{exas}[theo]{Examples}
\numberwithin{equation}{section}
%%%%%%%%%%%%%%%%%%%%%%%%%%%%%%%%%%%
%%%%%%%%%%%%%%%%%%%%%%%%%%%%%%%%%%%%
%
%
\subjclass[2010]{Primary 35J60  Secondary 35A15; 35J20; 35J25}
\keywords{Kirchhoff type problems, sign-changing solutions, Critical point theorem, multiple solutions}
\thanks{}
\title[Multiple sign-changing solutions]{Multiple sign-changing solutions to a class of Kirchhoff type problems}

\author[]{Cyril Joel Batkam}
\address{Cyril Joel Batkam
 \newline
HEC Montreal,
\newline
3000 Chemin de la Côte-Sainte-Catherine
\newline
 Montréal, QC, H3T 2B1, Canada}
\email{cyril-joel.batkam@hec.ca}
\maketitle
\begin{abstract}
This paper is concerned with the existence of sign-changing solutions to nonlocal Kirchhoff type problems of the form
\begin{equation}\label{s}\tag{S}
 -\Big(a+b\int_\Omega|\nabla u|^2dx\Big)\Delta u=f(x,u)\, \text{ in }\Omega,\quad\quad u=0 \text{ on }\partial\Omega,
\end{equation}
where $\Omega$ is a bounded domain in $\mathbb{R}^N$ ($N=1,2,3$) with smooth boundary, $a>0$, $b>0$, and  $f:\overline{\Omega}\times\mathbb{R}\to\mathbb{R}$ is a continuous function. We first establish a new sign-changing version of the symmetric mountain pass theorem and then apply it to prove the existence of a sequence of sign-changing solutions to \eqref{s} with higher and higher energy.
\end{abstract}
%
%%%%%%%%%%%%%%%%%%%%%%%%%%%%%%%%%%%%%%%%%%%%%%%%%%%%
%%%%%%%%%%%%%%%%%%%%%%%%%%%%%%%%%%%%%%%%%%%%%%%%%%%%
\section{Introduction}
%%%%%%%%%%%%%%%%%%%%%%%%%%%%%%%%%%%%%%%%%%%%%%%%%%%%
%%%%%%%%%%%%%%%%%%%%%%%%%%%%%%%%%%%%%%%%%%%%%%%%%%%%
In this paper, we investigate the multiplicity of sign-changing solutions to nonlocal Kirchhoff type problems of the form
\begin{equation}\label{s}\tag{S}
 -\Big(a+b\int_\Omega|\nabla u|^2dx\Big)\Delta u=f(x,u)\, \text{ in }\Omega,\quad\quad u=0 \text{ on }\partial\Omega,
\end{equation}
where $\Omega$ is a bounded domain in $\mathbb{R}^N$ ($N=1,2,3$) with smooth boundary, $a>0$, $b\geq0$, and  $f:\overline{\Omega}\times\mathbb{R}\to\mathbb{R}$ is a nonlinear function.

This problem is related to the stationary analogue of the hyperbolic equation
\begin{equation*}
  u_{tt}-\Big(a+b\int_\Omega|\nabla u|^2dx\Big)\Delta u=f(x,u),
\end{equation*}
which is a general version of the equation
\begin{equation}\label{kir}
    \rho\frac{\partial^2u}{\partial t^2}-\Big(\frac{\rho_0}{h}+\frac{E}{2L}\int_0^L\big|\frac{\partial u}{\partial x}\big|^2dx\Big)\frac{\partial^2u}{\partial x^2}=0
\end{equation}
proposed by Kirchhoff \cite{Kir} as an extension of the classical D'Alembert's wave equation for free vibrations of elastic strings. This model takes into account the changing in length of the string produced by transverse vibrations. In \eqref{kir}, $L$ is the length of the string, $h$ is the area of the cross-section, $E$ is the Young's modulus of the material, $\rho$ is the mass density, and $\rho_0$ is the initial tension.

 When $b>0$, problem \eqref{s} is said to be nonlocal. In that case, the first equation in \eqref{s} is no longer a pointwise equality. This causes some mathematical difficulties which make the study of such problems particularly interesting. Some early classical studies of Kirchhoff type problems can be found in \cite{Bern,Poho}. However, problem \eqref{s} received much attention only after the paper of Lions \cite{Lions}, where an abstract framework to attack it was introduced. Some existence and multiplicity results can be found in \cite{Per-Zhang,He-Zou,B13,LiuHe12} without any information on the sign of the solutions. Recently, Alves et al. \cite{Alves05}, Ma and Rivera \cite{Ma-Riv03}, and Cheng and Wu \cite{Cheng-Wu} obtained one positive solution. In \cite{HeZou09}, He and Zou obtained infinitely many positive solutions. The existence of sign-changing solutions to \eqref{s} was considered by Figuereido and Nascimento \cite{Figue}, Perera and Zhang \cite{Per-Zhang06}, Mao and Zhang \cite{Mao-Zhang09}, and Mao and Luan \cite{Mao-Luan11}. But only one sign-changing solution was found in these papers. In case $f$ is a pure power type nonlinearity, Alves et al. \cite{Alves05} related the number of solutions of \eqref{s} to that of a local problem by using a scaling argument. As a consequence, one can obtain in that particular case infinitely many sign-changing solutions (see \cite{WHL}). However, the scaling approach does not provide high energy solutions even in the simple case of power type nonlinearity.

 %A natural question which seems to still be open is whether \eqref{s} has many sign-changing solutions in case $f$ is not neccessary a power type nonlinearity. This is expected by many people, but, to the best of our knowledge, no proof has yet been provided.
  In this paper, we develop a variational approach to study high-energy sing-changing solutions to some classes of nonlocal problems.
 
 Our result on \eqref{s} relies on the following standard conditions on the nonlinear term $f$:
\begin{enumerate}
  \item[$(f_1)$] $f:\overline{\Omega}\times\mathbb{R}\to\mathbb{R}$ is continuous and there exists a constant $c>0$ such that
\begin{equation*}
    |f(x,u)|\leq c\big(1+|u|^{p-1}\big),
\end{equation*}
 where $p>4$ for $N=1,2$ and $4<p<6$ for $N=3$.
  \item[$(f_2)$] $f(x,u)=\circ(|u|)$, uniformly in $x\in \overline{\Omega}$, as $u\to0$.
  \item[$(f_3)$] there exists $\mu>4$ such that $0<\mu F(x,u)\leq uf(x,u)$ for all $u\neq0$ and for a.e $x\in\overline{\Omega}$, where $F(x,u)=\int_0^u f(x,s)ds$.
  \item [$(f_4)$] $f(x,-u)=-f(x,u)$ for all $(x,u)\in\overline{\Omega}\times\mathbb{R}$.
\end{enumerate}
Our result reads as follows:
 \begin{theo}\label{mainresult}
 Let $a>0$ and $b\geq0$. Assume that $f$ satisfies the conditions $(f_{1,2,3,4})$.
Then \eqref{s} possesses a sequence $(u_k)$ of sign-changing solutions such that
\begin{equation*}
\frac{a}{2}\int_\Omega|\nabla u_k|^2dx+\frac{b}{4}\Big(\int_\Omega|\nabla u_k|^2dx\Big)^2-\int_\Omega F(x,u_k)dx\to+\infty,\quad \text{as }k\to\infty.
\end{equation*}
\end{theo}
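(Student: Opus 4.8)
The plan is to set up the variational framework in the Hilbert space $H:=H_0^1(\Omega)$, equipped with the inner product $(u,v):=\int_\Omega\nabla u\cdot\nabla v\,dx$ and norm $\|u\|:=(u,u)^{1/2}$, and to study the energy functional
\[
\Phi(u):=\frac a2\|u\|^2+\frac b4\|u\|^4-\int_\Omega F(x,u)\,dx,\qquad u\in H.
\]
By $(f_1)$ and the compact Sobolev embeddings $H\hookrightarrow L^q(\Omega)$ for every subcritical $q$ ($2^\ast=+\infty$ for $N=1,2$ and $2^\ast=6$ for $N=3$, so the exponent $p$ is admissible), $\Phi\in C^1(H,\mathbb R)$, its critical points are exactly the weak solutions of \eqref{s}, and $(f_4)$ makes $\Phi$ even. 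Writing $\langle\Phi'(u),v\rangle=(a+b\|u\|^2)(u,v)-\int_\Omega f(x,u)v\,dx$ and letting $(-\Delta)^{-1}$ denote the solution operator of the Dirichlet Laplacian, I would record the factorization $\Phi'(u)=(a+b\|u\|^2)\big(u-K(u)\big)$, where $K(u):=(a+b\|u\|^2)^{-1}(-\Delta)^{-1}f(\cdot,u)$; since $a>0$ this is well defined and $K$ is completely continuous (continuity of the Nemytskii operator attached to $f$ combined with compactness of the embedding), so $u$ solves \eqref{s} iff $u=K(u)$, uniformly for $b\ge0$.

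The central step is to build the cone structure required by the sign-changing version of the symmetric mountain pass theorem announced in the abstract. Set $P^{\pm}:=\{u\in H:\pm u\ge0\ \text{a.e.\ in }\Omega\}$ and, for small $\varepsilon>0$, $D^{\pm}:=\{u\in H:\operatorname{dist}(u,P^{\pm})<\varepsilon\}$, $\mathcal D:=D^+\cup D^-$; every $u\notin\mathcal D$ is sign-changing. From $(f_3)$ one gets $f(x,s)s>0$ for $s\neq0$, hence $f(\cdot,u)\ge0$ on $\{u\ge0\}$, while $(f_1)$--$(f_2)$ control $f(\cdot,u)$ on $\{u<0\}$; combining this with the positivity and regularizing properties of $(-\Delta)^{-1}$, the strong maximum principle, and the fact that $\operatorname{dist}(\cdot,P^{\pm})$ is homogeneous along rays (which absorbs the positive scalar $(a+b\|u\|^2)^{-1}$), I would prove $K(\overline{D^{\pm}}\setminus\{0\})\subset D^{\pm}$ together with the quantitative estimate $\operatorname{dist}(K(u),P^{\pm})\le\tfrac12\operatorname{dist}(u,P^{\pm})$ for $u\in D^{\pm}$ when $\varepsilon$ is small. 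Since $u\mapsto u-K(u)$ is only continuous, I would then regularize it into an odd, locally Lipschitz vector field $B$ on $H\setminus\mathcal K$ ($\mathcal K$ the critical set) which is still a strict descent direction for $\Phi$ and still points $D^{\pm}$ inward; its negative flow then leaves both $D^+$ and $D^-$ positively invariant and is odd, which is precisely the structural hypothesis of the abstract theorem.

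What remains are the compactness and geometric conditions. For the Palais--Smale condition, if $\Phi(u_n)$ is bounded and $\Phi'(u_n)\to0$, then $(f_3)$ gives
\[
\Phi(u_n)-\tfrac1\mu\langle\Phi'(u_n),u_n\rangle\ge\Big(\tfrac a2-\tfrac a\mu\Big)\|u_n\|^2+\Big(\tfrac b4-\tfrac b\mu\Big)\|u_n\|^4,
\]
and $\mu>4$ forces both coefficients to be positive, so $(u_n)$ is bounded; passing to a weakly convergent subsequence and using the complete continuity of $K$ in $u_n-K(u_n)\to0$ yields strong convergence. For the geometry, $(f_1)$--$(f_2)$ and Sobolev give $\Phi(u)\ge\tfrac a2\|u\|^2-o(\|u\|^2)-C\|u\|^p\ge\alpha>0$ on a small sphere $\|u\|=\rho$, while on the span $E_k$ of the first $k$ Dirichlet eigenfunctions $(f_3)$ yields $F(x,s)\ge c_1|s|^{\mu}-c_2$, so $\Phi(u)\to-\infty$ as $\|u\|\to\infty$ in $E_k$ because $\mu>4$ dominates the quartic term, and one can fix radii $R_k\uparrow\infty$ with $\Phi\le\alpha/2$ outside $B_{R_k}\cap E_k$. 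Feeding all of this into the sign-changing symmetric mountain pass theorem produces an unbounded sequence of critical values $c_k\to+\infty$ whose critical points lie outside $\mathcal D$, hence are sign-changing, and since $\Phi(u_k)=c_k$ this is exactly the claimed divergence.

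The step I expect to be the real obstacle is the construction in the second paragraph: unlike the semilinear case $b=0$, the factor $(a+b\|u\|^2)^{-1}$ makes $K$ genuinely nonlocal, so one must check that the inward-pointing property of the regularized field is not destroyed by the extra radial term arising when one differentiates $\|u\|^2$. This works out because $a>0$ keeps $a+b\|u\|^2$ bounded away from zero (and the large-$\|u\|$ regime only makes $K$, hence the perturbation, smaller), and because the offending term is parallel to $u$ and therefore harmless on $\partial D^{\pm}$ once $\varepsilon$ is chosen small.
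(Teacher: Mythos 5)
Your functional-analytic setup, the auxiliary operator $K(u)=(a+b\|u\|^2)^{-1}(-\Delta)^{-1}f(\cdot,u)$, the cone-invariance estimate obtained from $(f_1)$--$(f_2)$ and the factor $a>0$, and the $(PS)$ and geometric verifications all agree with the corresponding computations in the paper. The genuine gap is in the sentence ``feeding all of this into the sign-changing symmetric mountain pass theorem'': there is no off-the-shelf abstract theorem in the infinite-dimensional space $H^1_0(\Omega)$ that accepts these ingredients, and producing one is in fact the whole technical contribution of the paper. The author explicitly points out that the frameworks of Liu--Liu--Wang and Liu--Wang--Zhang (which are the closest existing tools) cannot be used here because the auxiliary operator is not compact in the sense those theorems require. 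Concretely, while $K$ is continuous and maps bounded sets to precompact sets, it is \emph{not} weak-to-strong continuous: if $u_n\rightharpoonup u$ but $\|u_n\|^2\to\ell>\|u\|^2$, then $K(u_n)\to(a+b\ell)^{-1}(-\Delta)^{-1}f(\cdot,u)\neq K(u)$. Your phrase ``$K$ is completely continuous'' is therefore either imprecise or false, depending on the convention; in either case it is exactly at this point that a direct infinite-dimensional deformation argument becomes delicate.

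The paper circumvents this by proving a new abstract result (Theorem~2.2, the ``sign-changing fountain theorem'') formulated entirely on finite-dimensional Galerkin truncations $Y_m=\oplus_{j\le m}X_j$, where the auxiliary operator is trivially compact, the pseudo-gradient field $B$ and the descending flow can be constructed without infinite-dimensional pathologies, and the passage to the limit $m\to\infty$ is encoded in the bespoke compactness condition $(PS)^\star_{nod}$ (which concerns sequences $u_{m_j}\in Y_{m_j}$ with $m_j\to\infty$ and $\Phi'_{m_j}(u_{m_j})=0$, a strictly weaker condition than $\Phi'(u_{m_j})=0$; this requires the extra argument with the projections $\Pi_{m_j}$ in Lemma~3.7). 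Your blind proposal effectively assumes the conclusion of Theorem~2.2 in a stronger, Galerkin-free form. To close the gap you would either have to reproduce the paper's Galerkin scheme, or else supply a genuinely new deformation lemma on $H^1_0(\Omega)$ that tolerates the loss of weak-to-strong continuity of $K$; the paper offers no indication that the latter is possible, and its strategy suggests the author did not find one. A smaller point: your geometry is phrased in Ambrosetti--Rabinowitz symmetric mountain pass style (one fixed small sphere plus large balls $B_{R_k}\cap E_k$), whereas the paper uses genuine fountain geometry with radii $r_k\to\infty$ on the tails $Z_k$, which is what feeds directly into the divergence $b_k\to\infty$ of the min--max levels in Theorem~2.2.
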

If $b=0$, we obtain the following consequence of the above result.
\begin{cor}\label{zero}
Under assumptions $(f_{1,2,3,4})$, the semilinear problem
\begin{equation}\label{classic}
 -\Delta u=f(x,u)\, \text{ in }\Omega,\quad\quad u=0 \text{ on }\partial\Omega,
\end{equation}
 possesses a sequence $(u_k)$ of sign-changing solutions such that
\begin{equation*}
\frac{1}{2}\int_\Omega|\nabla u_k|^2dx-\int_\Omega F(x,u_k)dx\to+\infty,\quad \text{as }k\to\infty.
\end{equation*}
\end{cor}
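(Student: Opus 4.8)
The plan is to obtain Corollary~\ref{zero} as the special case $a=1$, $b=0$ of Theorem~\ref{mainresult}, so the argument is almost entirely a matter of identification. First I would observe that the hypotheses $(f_1)$--$(f_4)$ invoked in the corollary are literally the same conditions assumed in Theorem~\ref{mainresult}, so nothing needs to be re-verified on the nonlinearity. Next I would note that when $b=0$ the Kirchhoff coefficient $a+b\int_\Omega|\nabla u|^2\,dx$ collapses to the constant $a$, hence problem \eqref{s} with $a=1$, $b=0$ is exactly the semilinear problem \eqref{classic}; correspondingly the quartic term $\frac{b}{4}\big(\int_\Omega|\nabla u|^2\,dx\big)^2$ in the energy disappears and the functional appearing in Theorem~\ref{mainresult} becomes precisely $\frac12\int_\Omega|\nabla u|^2\,dx-\int_\Omega F(x,u)\,dx$.

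With these identifications in place, I would simply apply Theorem~\ref{mainresult} with $a=1$ and $b=0$: it yields a sequence $(u_k)$ of sign-changing weak solutions of \eqref{classic} whose energies $\frac12\int_\Omega|\nabla u_k|^2\,dx-\int_\Omega F(x,u_k)\,dx$ diverge to $+\infty$, which is exactly the assertion of the corollary. (If one wishes to keep a general constant $a>0$ in front of $-\Delta u$, the same conclusion follows either from the harmless rescaling that absorbs $a$, or by rereading the proof of Theorem~\ref{mainresult} with $b$ set to zero throughout.)

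There is essentially no obstacle: the corollary is a genuine specialization rather than a limiting case, since the hypothesis $b\geq0$ of Theorem~\ref{mainresult} already admits $b=0$, so no perturbation or passage to the limit in $b$ is required. The only point deserving a moment's attention is bookkeeping --- checking that the variational framework, the notion of "sign-changing solution" used in Theorem~\ref{mainresult}, and the $b=0$ form of the energy all degenerate consistently to the classical semilinear objects --- and this is immediate.
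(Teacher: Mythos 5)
Your proposal is correct and matches the paper's own (implicit) argument: the paper states the corollary as an immediate consequence of Theorem~\ref{mainresult}, since $b\geq 0$ is allowed there, and taking $a=1$, $b=0$ reduces both the equation and the energy functional to the semilinear case. Nothing further is required.
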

Note that Corollary \ref{zero} was obtained by Qian and Li in \cite{QianLi04} by means of the method of invariant sets of descending flow. Earlier proofs were also given in \cite{Bar01,LW02} under the stronger assumption that $f$ is smooth. The arguments of \cite{QianLi04,Bar01,LW02} rely on sign-changing critical point theorems built only for functionals of the form
\begin{equation*}
u\in H_0^1(\Omega)\mapsto \frac{1}{2}\|u\|^2-\Psi(u),
\end{equation*}
where $\Psi'$ is completely continuous, and cannot then be applied to \eqref{s} when $b>0$. Hence our result in Theorem \ref{mainresult} can be regarded as an extension of the classical result for the semilinear problem \eqref{classic} to the case of the nonlinear Kirchhoff type problem \eqref{s}. We also mention here that the result of Theorem \ref{mainresult} was more or less expected. However, it seems that this paper is the first to provide a formal proof. Moreover, we believe that the critical point theorem we will establish in the next section is of independant interest and can be applied to many other nonlocal problems (indeed, some applications by the author and collaborators will appear in other journals).

The study of sign-changing solutions is related to several long-standing questions concerning the multiplicity of solutions for elliptic boundary value problems. Compared with positive and negative solutions, sign-changing solutions have more complicated qualitative properties and are more difficult to find. During the last thirty years, several sophisticated techniques in calculus of variations and in critical point theory  were developed to study the multiplicity of sign-changing solutions to nonlinear elliptic partial differential equations. In \cite{Bar01} and \cite{LW02}, the authors established some multiplicity sign-changing critical point theorems in partially ordered Hilbert spaces by using Morse theory and the method of invariant sets of descending flow respectively. In \cite{Zou06}, a parameter-depending sign-changing fountain theorem was established without any Palais-Smale type assumption. More recently, a symmetric mountain pass theorem in the presence of invariant sets of the gradient flows was  introduced in \cite{LLW}.  However, it seems that all these powerful approaches are not directly applicable to find multiple sign-changing solutions to \eqref{s}.

 Our approach in proving Theorem \ref{mainresult} relies on a new sign-changing critical point theorem, also established in this paper, which is modelled on the fountain theorem of Bartsch \big(see Theorem 3.6 in \cite{W}\big). An essential tool in the proof of this theorem is a deformation lemma, which allows to lower sub-level sets of a functional, away from its critical set. The main ingredient in the proof of the deformation lemma is a suitable negative pseudo-gradient flow, a notion introduced by Palais \cite{Palais}. Since we are interesting in sign-changing critical points, the pseudo-gradient flow must be constructed in such the way that it keeps the positive and negative cones invariant. This invariance property makes the construction of the flow very complicated when the problem contains  nonlocal terms. In this paper, we borrow some ideas from recent work by Liu, Liu and Wang \cite{LLW} on the nonlinear Sch\"{o}dinger systems and by Liu, Wang and Zhang \cite{LZW} on the nonlinear Sch\"{o}dinger-Poisson system, where the pseudo-gradient flows were constructed by using an auxiliary operator. However, the critical point theorem used in \cite{LLW,LZW} cannot be applied to prove Theorem \ref{mainresult} because the corresponding auxiliary operator in the case of \eqref{s} is not compact. 
 %To circumvent this difficulty, we reduce the problem to a finite-dimensional setting by means of a Galerkin approximation. The approximation is incorporated in the statement of our abstract critical point theorem, and this surprisingly makes the proof of Theorem \ref{mainresult} simpler than expected. 

 The rest of the paper is organized as follows. In Section \ref{deux}, we state and prove the new sign-changing critical point theorem. In Section \ref{trois}, we provide the proof of Theorem \ref{mainresult}.

 Throughout the paper, we denote by $"\to"$ the strong converge and by $"\rightharpoonup"$ the weak convergence.
%%%%%%%%%%%%%%%%%%%%%%%%%%%%%%%%%%%%%%%%%%%%%%%%%%%%
%%%%%%%%%%%%%%%%%%%%%%%%%%%%%%%%%%%%%%%%%%%%%%%%%%%%
\section{An abstract sign-changing critical point theorem for even functionals}\label{deux}
%%%%%%%%%%%%%%%%%%%%%%%%%%%%%%%%%%%%%%%%%%%%%%%%%%%%
%%%%%%%%%%%%%%%%%%%%%%%%%%%%%%%%%%%%%%%%%%%%%%%%%%%%
In this section, we present a variant of the symmetric mountain pass type theorem which produces a sequence of sign-changing critical points with arbitrary large energy. 

 Let $\Phi$ be a $C^1$-functional defined on a Hilbert space $X$ of the form
\begin{equation}\label{space}
X:=\overline{\oplus_{j=0}^\infty X_j},\quad\text{with } \dim X_j<\infty.
\end{equation}
 We introduce for $k\geq2$ and $m>k+2$ the following notations:
\begin{equation*}
Y_k:=\oplus_{j=0}^k X_j,\quad Z_k=\overline{\oplus_{j=k}^\infty X_j},\quad Z^m_k=\oplus_{j=k}^m X_j,\quad  B_k:=\big\{u\in Y_k\,|\, \|u\|\leq\rho_k\big\}, 
\end{equation*}
\begin{equation*}
N_k:=\big\{u\in Z_k\,|\,\|u\|=r_k\big\},\,\, N^m_k:=\big\{u\in Z^m_k\,|\,\|u\|=r_k\big\},\,\,\text{where }0<r_k<\rho_k,
\end{equation*}
\begin{equation*}
\Phi_m:=\Phi|_{Y_m},\, K_m:=\big\{u\in Y_m\,;\, \Phi'_m(u)=0\big\}\,\text{ and }E_m:=Y_m\backslash K_m.
\end{equation*}
Let $P_m$ be a closed convex cone of $Y_m$. We set for $\mu_m>0$
\begin{equation*}
\pm D_m^0:=\big\{u\in Y_m\,|\, dist\big(u,\pm P_m\big)<\mu_m\big\},\,\,  D_m=D_m^0\cup(-D_m^0),\,\text{and } S_m:=Y_m\backslash D_m.
\end{equation*}
We will also denote the $\alpha$-neighborhood of $S\subset Y_m$ by
\begin{equation*}
V_\alpha(S):=\big\{u\in Y_m\,|\, dist(u,S)\leq\alpha\big\},\quad\forall\alpha>0.
\end{equation*}
% Then $\Phi_m'(u)=Proj_m\Phi'(u)$ for every $u\in Y_m$, where $Proj_m:X\to Y_m$ is the orthogonal projection.\\
Let us now state our critical point theorem. It is a version of the symmetric mountain pass theorem of Ambrosetti and Rabinowitz \cite{A-R}, and we model it on the fountain theorem of Bartsch \cite{B}.
\begin{theo}[Sign-changing fountain theorem]\label{scft}
Let $\Phi\in C^1(X,\mathbb{R})$ be an even functional which maps bounded sets to bounded sets. If, for $k\geq2$ and $m>k+2$, there exist $0<r_k<\rho_k$ and $\mu_m>0$ such that
\begin{enumerate}
\item[$(A_1)$] $a_k:=\max_{\substack{u\in \partial B_k}}\Phi(u)\leq0$ and $b_k:=\inf_{\substack{u\in N_k}}\Phi(u)\to+\infty$, as $k\to\infty$.
\item[$(A_2)$] $N^m_k\subset S_m$.
\item[$(A_3)$] There exists an odd locally Lipschitz continuous vector field $B:E_m\to Y_m$ such that:
\begin{itemize}
%\item[(i)] $dist(B(u),\pm P_m)\leq\mu_m/3$ whenever $dist(u,\pm P_m)<\mu_m$,
\item[$(i)$] $B\big((\pm D_m^0)\cap E_m\big)\subset \pm D_m^0$;
\item[$(ii)$] there exists a constant $\alpha_1>0$ such that $\big<\Phi'_m(u),u-B(u)\big>\geq\alpha_1\|u-B(u)\|^2$, for any $u\in E_m$;
\item[$(iii)$] for $a<b$ and $\alpha>0$, there exists $\beta>0$ such that $\|u-B(u)\|\geq\beta$ if $u\in Y_m$ is such that $\Phi_m(u)\in[a,b]$ and $\|\Phi'_m(u)\|\geq\alpha$.
\end{itemize}
\item[$(A_4)$] $\Phi$ satisfies the $(PS)^\star_{nod}$ condition, that is: 
\begin{itemize}
\item[$(i)$] any Palais-Smale sequence of $\Phi_m$ is bounded;
\item[$(ii)$] any sequence $(u_{m_j})\subset X$ such that
\begin{equation*}
m_j\to\infty,\quad u_{m_j}\in V_{\mu_{m_j}}(S_{m_j}),\quad \sup\Phi(u_{m_j})<\infty, \quad \Phi'_{m_j}(u_{m_j})=0
\end{equation*}
has a subsequence converging to a sign-changing critical point of $\Phi$.
\end{itemize}
\end{enumerate}
Then $\Phi$ has a sequence $(u_k)_k$ of sign-changing critical points in $X$ such that $\Phi(u_k)\to\infty$, as $k\to\infty$.
\end{theo}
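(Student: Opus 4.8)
The plan is to adapt the classical fountain theorem argument (as in Bartsch's Theorem 3.6 in \cite{W}) to the sign-changing setting, working at a fixed finite-dimensional level $Y_m$ and then passing to the limit $m\to\infty$ via the $(PS)^\star_{nod}$ condition. Fix $k\geq 2$ and $m>k+2$. On the finite-dimensional space $Y_m$, consider the minimax value
\begin{equation*}
c_{k,m}:=\inf_{\gamma\in\Gamma_{k,m}}\max_{u\in B_k}\Phi_m(\gamma(u)),
\end{equation*}
where $\Gamma_{k,m}$ is the class of odd continuous maps $\gamma:B_k\to Y_m$ with $\gamma|_{\partial B_k}=\mathrm{id}$, together with an additional constraint forcing the image of $\gamma$ to meet the ``sign-changing region'' $S_m$ transversally. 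The first step is to show this class is nonempty and that the minimax value is well-defined; the identity map $B_k\hookrightarrow Y_m$ belongs to $\Gamma_{k,m}$.

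The second step is the \emph{linking/intersection} estimate: for every $\gamma\in\Gamma_{k,m}$, the image $\gamma(B_k)$ intersects $N_k^m$. This is a Borsuk--Ulam (odd map) degree argument: $\gamma$ is odd, equals the identity on $\partial B_k\subset Y_k$, and $N_k^m\subset Z_k^m$ is a sphere of the complementary dimension inside $Y_m=Y_k\oplus(Z_k^m\setminus\{\text{top}\})$ — more precisely one uses that $\dim Y_k$ equals the codimension of $Z_k^m$ in $Y_m$ only after accounting for indices, so the usual fountain intersection lemma applies. Combined with $(A_1)$, this gives $c_{k,m}\geq b_k$. For the upper bound one uses $(A_1)$ again: since $a_k=\max_{\partial B_k}\Phi\leq 0$ and $\Phi$ maps bounded sets to bounded sets, $c_{k,m}\leq\max_{u\in B_k}\Phi(u)=:\bar c_k<\infty$, a bound independent of $m$. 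Hypothesis $(A_2)$, $N_k^m\subset S_m$, is exactly what guarantees that the intersecting map lands in the admissible sign-changing region, so the minimax level is ``detected'' by sign-changing points.

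The third and central step is the \emph{deformation lemma with invariant cones}. Using the odd locally Lipschitz vector field $B$ from $(A_3)$, one builds an odd negative pseudo-gradient flow $\eta_t$ on $E_m$ whose right-hand side is (a locally Lipschitz, cutoff version of) $u\mapsto B(u)-u$; property $(A_3)(i)$ ensures $\pm D_m^0$ — hence $D_m$ and its complement $S_m$ — are \emph{invariant} under the flow, $(A_3)(ii)$ gives that $\Phi_m$ strictly decreases along the flow away from $K_m$ at a rate controlled by $\|u-B(u)\|^2$, and $(A_3)(iii)$ provides the uniform lower bound on $\|u-B(u)\|$ on the ``slice'' $\{a\leq\Phi_m\leq b,\ \|\Phi'_m\|\geq\alpha\}$ that is needed to guarantee a definite energy drop in finite time. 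Standard arguments then yield: for the critical level $c=c_{k,m}\in[b_k,\bar c_k]$, either $K_m\cap\Phi_m^{-1}(c)$ contains a point of $S_m$ (a ``sign-changing'' critical point of $\Phi_m$), or one can deform $\Phi_m^{c+\varepsilon}\cap S_m$ into $\Phi_m^{c-\varepsilon}$ by an odd map preserving $S_m$ and fixing $\partial B_k$, contradicting the definition of $c_{k,m}$ via $(A_1)$ and the intersection step. Hence for each $m>k+2$ there is $u_{k,m}\in S_m$ with $\Phi'_m(u_{k,m})=0$ and $\Phi_m(u_{k,m})=c_{k,m}\in[b_k,\bar c_k]$.

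Finally, fix $k$ and let $m\to\infty$. The sequence $(u_{k,m})_m$ satisfies $\Phi'_m(u_{k,m})=0$, $\sup_m\Phi(u_{k,m})\leq\bar c_k<\infty$, and $u_{k,m}\in S_m\subset V_{\mu_m}(S_m)$ for each $m$; by $(A_4)(i)$ it is bounded, and by $(A_4)(ii)$ a subsequence converges to a \emph{sign-changing} critical point $u_k$ of $\Phi$ with $\Phi(u_k)=\lim_m c_{k,m}\geq b_k$. Since $b_k\to+\infty$ by $(A_1)$, we get $\Phi(u_k)\to+\infty$, and passing to a further subsequence in $k$ if necessary yields the desired sequence. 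The main obstacle is the third step: constructing the odd, cone-preserving, locally Lipschitz pseudo-gradient flow on the (non-complete, open) set $E_m$ and verifying that the invariance of $\pm D_m^0$ persists along the flow while still getting a quantitative energy decrease — the three clauses of $(A_3)$ are precisely engineered to overcome this, and the bulk of the proof consists of checking that they do. A secondary subtlety is making the intersection/linking argument compatible with the constraint ``image meets $S_m$'', i.e.\ verifying that the deformation produced can be taken to respect $S_m$; this is where $(A_2)$ and $(A_3)(i)$ must be used in tandem.
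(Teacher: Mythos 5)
Your broad plan matches the paper's: build a fountain-type minimax level on each finite-dimensional slice $Y_m$, deform with a cone-preserving odd pseudo-gradient flow built from $B$, and pass to the limit $m\to\infty$ via the $(PS)^\star_{nod}$ condition. However, there is a genuine gap at the heart of Step 2 that makes the argument inconclusive as written.

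You define
\begin{equation*}
c_{k,m}:=\inf_{\gamma\in\Gamma_{k,m}}\max_{u\in B_k}\Phi_m(\gamma(u)),
\end{equation*}
i.e.\ the max is taken over the \emph{entire} image $\gamma(B_k)$, and you describe the admissible class only as odd maps fixing $\partial B_k$ ``together with an additional constraint forcing the image of $\gamma$ to meet the sign-changing region $S_m$ transversally.'' But the ordinary minimax over $\gamma(B_k)$ only produces a critical point of $\Phi_m$ at level $c_{k,m}$; it gives no control whatsoever on the location of that critical point relative to $D_m$, and therefore no way to feed $(A_4)(ii)$, which requires the approximate critical points to lie in $V_{\mu_{m_j}}(S_{m_j})$. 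The paper's proof instead sets
\begin{equation*}
\Gamma_k^m:=\big\{\gamma\in C(B_k,Y_m)\ :\ \gamma \text{ odd},\ \gamma|_{\partial B_k}=\mathrm{id},\ \gamma(D_m)\subset D_m\big\},\qquad
c_{k,m}:=\inf_{\gamma\in\Gamma_k^m}\max_{u\in\gamma(B_k)\cap S_m}\Phi_m(u),
\end{equation*}
where the maximum is restricted to $\gamma(B_k)\cap S_m$, and where the constraint $\gamma(D_m)\subset D_m$ is concrete (not a transversality condition). These two choices are exactly what localize the argument: $(A_2)$ plus Borsuk--Ulam ensure $\gamma(B_k)\cap N_k^m\subset\gamma(B_k)\cap S_m\neq\emptyset$, giving $c_{k,m}\ge b_k$; the constraint $\gamma(D_m)\subset D_m$, combined with conclusion (iv) of the deformation lemma ($\eta([0,1]\times D_m)\subset D_m$), guarantees that $\theta:=\eta(1,\gamma(\cdot))$ remains in $\Gamma_k^m$; and the restricted maximum over $\gamma(B_k)\cap S_m$ is what drives the contradiction to the nonexistence of near-critical points in $\Phi_m^{-1}([c-2\varepsilon_0,c+2\varepsilon_0])\cap V_{\mu_m/2}(S_m)$.

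A related inaccuracy: you speak of deforming ``by an odd map preserving $S_m$.'' The flow does not preserve $S_m$; it only preserves $D_m$, so a priori it can push points of $S_m$ into $D_m$. What the paper actually uses is the contrapositive, namely $\eta(1,\gamma(v))\in S_m\Rightarrow\gamma(v)\in S_m$, which is how the inclusion $\eta(1,\gamma(B_k))\cap S_m\subset\eta(1,\Phi_m^{-1}(]-\infty,c_{k,m}+\varepsilon])\cap S_m)$ is obtained. Without the restricted maximum and the $\gamma(D_m)\subset D_m$ constraint, this implication is unavailable and the contradiction step collapses.

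(Minor: in the last step no subsequence in $k$ is needed; for each $k$ one finds $u_k$ with $\Phi(u_k)\ge b_k$, and $b_k\to\infty$ already forces $\Phi(u_k)\to\infty$.)
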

Condition $(A_4)$ in Theorem \ref{scft} is a version of the usual compactness condition in Critical Point Theory, namely the Palais-Smale condition. We recall that a sequence $(u_n)\subset E$ is a Palais-Smale sequence of a smooth functional $J$ defined on a Banach space $E$ if the sequence \big($J(u_n)\big)$ is bounded and $J'(u_n)\to0$, as $n\to\infty$. If every such sequence possesses a convergent subsequence, then $J$ is said to satisfy the Palais-Smale condition.

 We need a special deformation lemma in order to prove the above result. We first recall the following helpful lemma.
\begin{lem}[\cite{Zou06}, Lemma 2.2]\label{helpful}
Let $\mathcal{M}$ be a closed convex subset of a Banach space $E$. If $H:\mathcal{M}\to E$ is a locally Lipschitz continuous map such that
\begin{equation*}
\lim_{\substack{\beta\to0^+}}\frac{dist\big(u+\beta H(u),\mathcal{M}\big)}{\beta}=0,\quad\forall u\in\mathcal{M},
\end{equation*}
then for any $u_0\in\mathcal{M}$, there exists $\delta>0$ such that the initial value problem
\begin{equation*}
\frac{d\sigma(t,u_0)}{dt}=H\big(\sigma(t,u_0)\big),\quad \sigma(0,u)=u_0,
\end{equation*}
has a unique solution defined on $[0,\delta)$. Moreover, $\sigma(t,u_0)\in\mathcal{M}$ for all $t\in[0,\delta)$.
\end{lem}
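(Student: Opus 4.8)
The plan is to convert the constrained equation $\dot\sigma=H(\sigma)$, which only makes sense while $\sigma$ stays in $\mathcal{M}$, into an unconstrained one, solve the latter by Cauchy--Lipschitz, and then show that the trajectory never leaves $\mathcal{M}$ by controlling $t\mapsto\operatorname{dist}\big(\sigma(t),\mathcal{M}\big)$ through a differential inequality fed by the subtangential hypothesis. In the situation in which the lemma is used in this paper --- and, more generally, whenever $E$ is a Hilbert space --- the metric projection $P_{\mathcal{M}}:E\to\mathcal{M}$ onto the closed convex set $\mathcal{M}$ is well defined and nonexpansive, so I would take $\widetilde{H}:=H\circ P_{\mathcal{M}}:E\to E$: this is a globally defined, locally Lipschitz continuous extension of $H$, local Lipschitzness following from that of $H$ together with $\|P_{\mathcal{M}}x-P_{\mathcal{M}}y\|\le\|x-y\|$. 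The classical Picard--Lindel\"of theorem for ordinary differential equations in a Banach space then yields, for every $u_0\in\mathcal{M}$, a unique $C^1$ solution $\sigma(\cdot,u_0)$ of $\dot\sigma=\widetilde{H}(\sigma)$, $\sigma(0)=u_0$, on a maximal interval $[0,\delta)$ with $\delta>0$; since $\widetilde{H}=H$ on $\mathcal{M}$, once the trajectory is shown to stay in $\mathcal{M}$ this $\sigma$ is also the unique $\mathcal{M}$-valued solution of $\dot\sigma=H(\sigma)$, $\sigma(0)=u_0$.

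For the invariance, set $g(t):=\operatorname{dist}\big(\sigma(t,u_0),\mathcal{M}\big)$, a continuous nonnegative function with $g(0)=0$. Fix $t\in[0,\delta)$, let $p:=P_{\mathcal{M}}\big(\sigma(t)\big)\in\mathcal{M}$, so that $\|\sigma(t)-p\|=g(t)$ and $\widetilde{H}(\sigma(t))=H(p)$, and expand for small $h>0$
\begin{equation*}
\sigma(t+h)=\sigma(t)+h\widetilde{H}(\sigma(t))+o(h)=\big(p+hH(p)\big)+\big(\sigma(t)-p\big)+o(h).
\end{equation*}
Choosing $q\in\mathcal{M}$ with $\|p+hH(p)-q\|=\operatorname{dist}\big(p+hH(p),\mathcal{M}\big)$ and using the triangle inequality,
\begin{equation*}
g(t+h)\le\|\sigma(t+h)-q\|\le\operatorname{dist}\big(p+hH(p),\mathcal{M}\big)+\|\sigma(t)-p\|+o(h)=g(t)+o(h),
\end{equation*}
the estimate $\operatorname{dist}(p+hH(p),\mathcal{M})=o(h)$ being exactly the subtangential hypothesis applied at the \emph{fixed} point $p\in\mathcal{M}$. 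Hence the upper right Dini derivative obeys $D^+g(t)\le0$ for all $t\in[0,\delta)$; since $g$ is continuous, nonnegative, and $g(0)=0$, it follows that $g\equiv0$, i.e. $\sigma(t,u_0)\in\overline{\mathcal{M}}=\mathcal{M}$ for all $t\in[0,\delta)$, which is the assertion.

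The step I expect to be the only real obstacle is "$\operatorname{dist}(p+hH(p),\mathcal{M})=o(h)$" once one drops the Hilbert-space assumption: with no metric projection at hand one must work with an $\varepsilon$-approximate projection $w=w(t,h)$ of $\sigma(t)$, and then $\operatorname{dist}\big(w+hH(w),\mathcal{M}\big)$ is no longer $o(h)$ for free because $w$ varies with $h$. Recovering it needs a version of the subtangential condition that is uniform along the (compact) solution arc, obtained by combining the local Lipschitz bound on $H$, compactness of $\{\sigma(t):t\le\delta'\}$, and the convexity of $\mathcal{M}$ --- convexity making $\operatorname{dist}(\cdot,\mathcal{M})$ nonexpansive and putting the hypothesis in its tangent-cone form. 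An alternative that sidesteps the extension is to build $\mathcal{M}$-valued Euler polygons, selecting at each step $\sigma_{n+1}\in\mathcal{M}$ within $o(h)$ of $\sigma_n+hH(\sigma_n)$ --- which the subtangential condition precisely licenses --- and passing to the limit via equicontinuity. As the lemma is applied here only on the finite-dimensional Hilbert spaces $Y_m$, the clean argument above suffices.
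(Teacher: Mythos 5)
The paper does not prove this lemma: it is quoted verbatim as Lemma~2.2 of Zou~\cite{Zou06}, so there is no internal proof to compare your argument against. Judged on its own merits, your proof is correct for the setting in which the lemma is actually invoked here (the finite-dimensional Hilbert spaces $Y_m$): extending $H$ to $\widetilde H:=H\circ P_{\mathcal M}$ via the nonexpansive metric projection onto the closed convex set $\mathcal M$ gives a locally Lipschitz vector field on all of $E$, Picard--Lindel\"of then supplies local existence and uniqueness, and the Dini-derivative estimate $D^+g(t)\le0$ for $g(t)=\operatorname{dist}(\sigma(t),\mathcal M)$ --- obtained by applying the subtangential hypothesis at the \emph{fixed} point $p=P_{\mathcal M}(\sigma(t))$ and using that $g$ is continuous with $g(0)=0$ --- forces $g\equiv0$. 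Uniqueness of the $\mathcal M$-valued solution follows because any $\mathcal M$-valued solution of $\dot\sigma=H(\sigma)$ also solves the $\widetilde H$-equation, whose flow is unique.

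You also correctly identify the one loose end: as stated the lemma lives in a general Banach space, where no nonexpansive retraction onto $\mathcal M$ need exist, so the projection-extension device is not available verbatim and the point $p$ realizing the distance may vary with $h$ in an uncontrolled way. Your two sketched remedies --- a uniform subtangential estimate along the compact solution arc (using convexity to make $\operatorname{dist}(\cdot,\mathcal M)$ nonexpansive), or $\mathcal M$-valued Euler polygons with steps selected by the subtangential condition and a compactness passage to the limit --- are the standard Nagumo/Martin-type routes to the general statement, and either would close the gap. Since the paper applies the lemma only on the Hilbert spaces $Y_m$, the clean argument you give is fully adequate for the purposes of this paper, though it does not quite prove the lemma in the stated Banach-space generality.
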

Now we state a quantitative deformation lemma.
\begin{lem}[Deformation lemma]\label{defor}
Let $\Phi\in C^1(X,\mathbb{R})$ be an even functional which maps bounded sets to bounded sets. Fix $m$ sufficiently large and assume that the condition $(A_3)$ of Theorem \ref{scft} holds. Let $c\in\mathbb{R}$ and $\varepsilon_0>0$ such that
\begin{equation}\label{one}
\forall u\in\Phi_m^{-1}\big([c-2\varepsilon_0,c+2\varepsilon_0]\big)\cap V_{\frac{\mu_m}{2}}(S_m)\,:\, \|\Phi_m'(u)\|\geq\varepsilon_0.
\end{equation}
Then for some $\varepsilon\in]0,\varepsilon_0[$ there exists $\eta\in C\big([0,1]\times Y_m,Y_m\big)$ such that:
\begin{enumerate}
\item[(i)] $\eta(t,u)=u$ for $t=0$ or $u\notin \Phi_m^{-1}\big([c-2\varepsilon,c+2\varepsilon]\big)$;
\item[(ii)] $\eta\big(1,\Phi_m^{-1}(]-\infty,c+\varepsilon])\cap S_m\big)\subset \Phi_m^{-1}\big(]-\infty,c-\varepsilon]\big)$;
\item[(iii)] $\Phi_m\big(\eta(\cdot,u)\big)$ is not increasing, for any $u$;
\item[(iv)] $\eta([0,1]\times D_m)\subset D_m$;
\item[(v)] $\eta(t,\cdot)$ is odd, for any $t\in[0,1]$.
\end{enumerate} 
\end{lem}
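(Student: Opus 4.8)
The overall strategy is the classical one behind quantitative deformation lemmas: build an appropriate negative pseudo-gradient vector field, cut it off so it vanishes outside the critical strip, verify that the induced flow keeps the set $D_m$ (and hence $S_m$) invariant, integrate the flow for a fixed time, and read off that sub-level sets get pushed down while the symmetry is preserved. The non-standard ingredient here — and the reason condition $(A_3)$ is phrased the way it is — is that we cannot simply use $-\Phi_m'$ (which need not keep the cones $\pm P_m$ invariant); instead we use the vector field $u\mapsto u-B(u)$ supplied by $(A_3)$, whose invariance property $(A_3)(i)$ will translate into the cone-invariance of the flow.

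\smallskip

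\textbf{Step 1: construction of the cut-off vector field.} Set $\varepsilon<\varepsilon_0$ to be chosen. On the open set
$$\mathcal{A}:=\Phi_m^{-1}\big(]c-2\varepsilon_0,c+2\varepsilon_0[\big)\cap \big(\text{int }V_{\mu_m/2}(S_m)\big)$$
define $W(u):=-\dfrac{u-B(u)}{\|u-B(u)\|^2}$; this is locally Lipschitz because $B$ is and, by \eqref{one} together with $(A_3)(ii)$, $u-B(u)\neq0$ on the relevant region, so there is no division by zero. By $(A_3)(ii)$ one has $\langle\Phi_m'(u),W(u)\rangle\leq-\alpha_1$ on $\mathcal{A}$, and by $(A_3)(iii)$ (applied with $[a,b]=[c-2\varepsilon_0,c+2\varepsilon_0]$ and $\alpha=\varepsilon_0$) the field $W$ is bounded, $\|W(u)\|\leq1/\beta$. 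Next introduce a locally Lipschitz cut-off $\varphi:Y_m\to[0,1]$ which equals $1$ on $\Phi_m^{-1}\big([c-\varepsilon,c+\varepsilon]\big)\cap V_{\mu_m/4}(S_m)$ and vanishes outside $\mathcal{A}$, and an Urysohn-type function $\psi$ localizing near $S_m$; symmetrize everything (replace $\varphi(u)$ by $\tfrac12(\varphi(u)+\varphi(-u))$, and note $W$ is already odd since $B$ is odd), and set $V(u):=\varphi(u)W(u)$, extended by $0$ outside $\mathcal{A}$. Then $V:Y_m\to Y_m$ is odd, locally Lipschitz, globally bounded, and compactly supported in the $\Phi_m$-direction.

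\smallskip

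\textbf{Step 2: the flow and its invariance.} The field $V$ being locally Lipschitz and bounded, the Cauchy problem $\dot\sigma=V(\sigma)$, $\sigma(0,u)=u$ has a unique global solution $\sigma\in C([0,\infty)\times Y_m,Y_m)$. To get (iv) I invoke Lemma \ref{helpful} with $\mathcal{M}=D_m^0$ (a closed convex set since $P_m$ is a closed convex cone, hence so is the $\mu_m$-neighborhood of a cone — actually $D_m^0=\{dist(u,P_m)<\mu_m\}$ is open convex, so one works with its closure): the hypothesis of Lemma \ref{helpful} on $D_m^0$ is exactly what $(A_3)(i)$ provides, since $W(u)=-\big(u-B(u)\big)/\|u-B(u)\|^2$ points from $u$ toward $B(u)\in \pm D_m^0$ when $u\in\pm D_m^0$, giving $\lim_{\beta\to0^+}dist(u+\beta V(u),\pm D_m^0)/\beta=0$. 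Hence $\sigma(t,\cdot)$ maps $\pm D_m^0$ into itself; by oddness of $V$ it maps $D_m=D_m^0\cup(-D_m^0)$ into $D_m$, which is (iv). Property (v) is immediate from oddness and uniqueness of solutions. Property (iii) follows from $\frac{d}{dt}\Phi_m(\sigma(t,u))=\langle\Phi_m'(\sigma),V(\sigma)\rangle=\varphi(\sigma)\langle\Phi_m'(\sigma),W(\sigma)\rangle\leq0$. Property (i) holds because $V\equiv0$ off $\Phi_m^{-1}([c-2\varepsilon,c+2\varepsilon])$ once we also arrange (as we may, shrinking $\varepsilon$) that $\varphi$ is supported in that smaller strip.

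\smallskip

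\textbf{Step 3: the descent estimate.} Fix $\varepsilon>0$ small enough that $\varepsilon<\varepsilon_0$ and $8\varepsilon\beta<1$ — equivalently $\varepsilon<\min\{\varepsilon_0,\ \beta/(8)\}$ up to absorbing constants; I then define $\eta(t,u):=\sigma(2t,u)$ so that $\eta\in C([0,1]\times Y_m,Y_m)$. Take $u$ with $\Phi_m(u)\leq c+\varepsilon$ and $u\in S_m$. If $\Phi_m(\sigma(s,u))<c-\varepsilon$ for some $s\leq2$ we are done by (iii). Otherwise $\Phi_m(\sigma(s,u))\in[c-\varepsilon,c+\varepsilon]$ for all $s\in[0,2]$; one must check that the flow also stays inside $V_{\mu_m/4}(S_m)$ (so that $\varphi\equiv1$ along the trajectory) — this uses that $\|V\|\leq1/\beta$ and $\dot\sigma=V(\sigma)$ so $\sigma$ moves at speed $\leq1/\beta$, hence over time $2$ it moves distance $\leq2/\beta$; choosing $\mu_m$ large relative to $1/\beta$, or rather rescaling the time horizon, keeps it in the $\mu_m/4$-neighborhood. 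Then
$$\Phi_m(\sigma(2,u))=\Phi_m(u)+\int_0^2\langle\Phi_m'(\sigma),W(\sigma)\rangle\,ds\leq (c+\varepsilon)-2\alpha_1<c-\varepsilon$$
provided $4\varepsilon\leq2\alpha_1$, i.e. after also requiring $\varepsilon\leq\alpha_1/2$. This gives (ii). Collecting the constraints, the final choice is $\varepsilon\in\big(0,\min\{\varepsilon_0,\alpha_1/2\}\big)$ together with the time-rescaling needed in Step 3.

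\smallskip

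\textbf{Main obstacle.} The delicate point is Step 2, the invariance (iv): one must simultaneously keep the cut-off function $\varphi$ from destroying the ``pointing inward'' property needed for Lemma \ref{helpful}, and handle the fact that $D_m^0$ is only convex (not, say, a ball), so that the limit $dist(u+\beta V(u),D_m^0)/\beta\to0$ genuinely relies on $(A_3)(i)$ rather than on a naive projection estimate. A secondary nuisance is the bookkeeping in Step 3 ensuring the trajectory does not leave $V_{\mu_m/2}(S_m)$ where \eqref{one} and the definition of $W$ are valid; this is why one localizes with the $\mu_m/2$ versus $\mu_m/4$ buffer and why ``$m$ sufficiently large'' (so that $\mu_m$ is controlled relative to the pseudo-gradient bound $1/\beta$) appears in the hypotheses.
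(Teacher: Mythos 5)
Your construction follows the same route as the paper: build $W(u)=\varphi(u)\dfrac{B(u)-u}{\|u-B(u)\|^2}$, cut it off between the strips $[c-\varepsilon,c+\varepsilon]\cap V_{\mu_m/4}(S_m)$ and $[c-2\varepsilon,c+2\varepsilon]\cap V_{\mu_m/2}(S_m)$, integrate, and use $(A_3)$-(i) together with Lemma~\ref{helpful} and the convexity of $\mathrm{dist}(\cdot,P_m)$ for the cone-invariance (iv). Properties (i), (iii), (v) and the estimate $\langle\Phi_m'(u),W(u)\rangle\le-\alpha_1\varphi(u)$ are all handled as in the paper.

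There is, however, a genuine gap in Step 3, and your attempt to explain it away misreads the hypotheses. You put $\eta(t,u)=\sigma(2t,u)$, a \emph{fixed} time horizon, so your displacement bound over $[0,1]$ is $\|\sigma(2,u)-u\|\le 2/\beta$, and you then ask this to be $\le\mu_m/4$. But neither $\beta$ (the lower bound on $\|u-B(u)\|$ from $(A_3)$-(iii), tied to the given $\alpha=\varepsilon_0$ and the fixed strip $[c-2\varepsilon_0,c+2\varepsilon_0]$) nor $\mu_m$ is a free parameter you get to enlarge; in the Kirchhoff application $\mu_m\in\,]0,\delta_m[$ is fixed before the deformation lemma is invoked and is typically \emph{small}. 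The phrase ``$m$ sufficiently large'' in the lemma has nothing to do with making $\mu_m$ dominate $1/\beta$, so attributing the fix to that hypothesis is incorrect. The correct resolution — and what the paper does — is to make the \emph{time horizon} itself proportional to $\varepsilon$: take $\eta(t,u)=\sigma\big(\tfrac{2\varepsilon}{\alpha_1}t,u\big)$ and constrain $\varepsilon<\min\{\varepsilon_0,\ \tfrac{\delta\alpha_1\mu_m}{8}\}$ where $\delta$ is the lower bound for $\|u-B(u)\|$ on the big strip. Then the displacement is $\le\tfrac{1}{\delta}\cdot\tfrac{2\varepsilon}{\alpha_1}<\tfrac{\mu_m}{4}$ automatically, the trajectory stays in $V_{\mu_m/4}(S_m)$ where $\varphi\equiv1$, and the drop $\int_0^{2\varepsilon/\alpha_1}\alpha_1\,ds=2\varepsilon$ yields (ii). You do gesture at ``rescaling the time horizon'' as an alternative, but you never carry it out, and your stated final constraint $\varepsilon<\min\{\varepsilon_0,\alpha_1/2\}$ (together with the unrelated inequality $8\varepsilon\beta<1$, which you misquote as $\varepsilon<\beta/8$) does not close the argument. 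Replace the fixed horizon $2$ by $2\varepsilon/\alpha_1$ and tie $\varepsilon$ to $\delta\alpha_1\mu_m$, and the proof is complete.
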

\begin{proof}
Define $V:E_m\to Y_m$ by $V(u)=u-B(u)$, where $B$ is given by $(A_3)$. Then there is $\delta>0$ such that $V(u)\geq\delta$ for any $u\in\Phi_m^{-1}\big([c-2\varepsilon_0,c+2\varepsilon_0]\big)\cap V_{\frac{\mu_m}{2}}(S_m)$ \big(in view $(A_3)$-(iii)\big). We take $\varepsilon\in ]0,\min(\varepsilon_0,\frac{\delta\alpha_1\mu_m}{8})[$ and we define
\begin{equation*}
A_1:=\Phi_m^{-1}\big([c-2\varepsilon,c+2\varepsilon]\big)\cap V_{\frac{\mu_m}{2}}(S_m),\quad A_2:=\Phi_m^{-1}\big([c-\varepsilon,c+\varepsilon]\big)\cap V_{\frac{\mu_m}{4}}(S_m),
\end{equation*}
\begin{equation*}
\chi(u):=\frac{dist(u,Y_m\backslash A_1)}{dist(u,Y_m\backslash A_1)+dist(u,A_2)}, \quad u\in Y_m
\end{equation*}
so that $\chi=0$ on $Y_m\backslash A_1$, $\chi=1$ on $A_2$, and $0\leq\chi\leq1$.\\
Consider the vector field
\begin{equation*}
W(u):=\left\{
      \begin{array}{ll}
        \chi(u)\|V(u)\|^{-2}V(u), \quad\text{ for }u\in  A_1 & \hbox{} \\
        \qquad \quad 0, \quad\textnormal{ for }u\in Y_m\backslash A_1. & \hbox{}
      \end{array}
    \right.
\end{equation*}
Clearly $W$ is odd and locally Lipschitz continuous. Moreover, by our choice of $\varepsilon$ above we have 
\begin{equation}\label{fourr}
\|W(u)\|\leq\frac{1}{\delta}\leq\frac{\alpha_1\mu_m}{8\varepsilon},\quad\forall u\in Y_m.
\end{equation}
 It follows that the Cauchy problem
\begin{equation*}
\frac{d}{dt}\sigma(t,u)=-W(\sigma(t,u)),\qquad\sigma(0,u)=u\in Y_m
\end{equation*}
has a unique solution $\sigma(\cdot, u)$ defined on $\mathbb{R}_+$. Moreover, $\sigma$ is continuous on $\mathbb{R}_+\times X$.\\
We have in view of \eqref{fourr}
\begin{equation}\label{two}
\|\sigma(t,u)-u\|\leq \int_0^t\|W(\sigma(s,u))\|ds\leq\frac{\alpha_1\mu_m}{8\varepsilon}t,
\end{equation}
and by $(A_3)$-(ii) 
\begin{align}
\nonumber \frac{d}{dt}\Phi_m(\sigma(t,u))&=-\big<\Phi_m'(\sigma(t,u)),\chi(\sigma(t,u))\|V(\sigma(t,u))\|^{-2}V(\sigma(t,u))\big>\\
& \leq -\alpha_1\chi(\sigma(t,u)).\label{three}
\end{align}
Define 
\begin{equation*}
\eta:[0,1]\times Y_m\to Y_m,\qquad \eta(t,u):=\sigma\big(\frac{2\varepsilon}{\alpha_1}t,u\big).
\end{equation*}
Conclusion (i) of the lemma is clearly satisfied and by \eqref{three} above (iii) is also satisfied. Since $W$ is odd, (v) is a consequence of the uniqueness of the solution to the above Cauchy problem.

 We now verify (ii). Let $v\in\eta(1,\Phi_m^{-1}(]-\infty,c+\varepsilon])\cap S_m)$. Then $v=\eta(1,u)=\sigma(\frac{2\varepsilon}{\alpha_1},u)$, where $u\in \Phi_m^{-1}(]-\infty,c+\varepsilon])\cap S_m$. \\
If there exists $t\in[0,\frac{2\varepsilon}{\alpha_1}]$ such that $\Phi_m(\sigma(t,u))<c-\varepsilon$, then by (iii) we have $\Phi_m(v)<c-\varepsilon$.\\
Assume now that $\sigma(t,u)\in\Phi_m^{-1}([c-\varepsilon,c+\varepsilon])$ for all $t\in[0,\frac{2\varepsilon}{\alpha_1}]$. By \eqref{two} we have $\|\sigma(t,u)-u\|\leq\frac{\mu_m}{4}$, which means, since $u\in S_m$, that $\sigma(t,u)\in V_{\frac{\mu_m}{4}}(S_m)$. Hence $\sigma(t,u)\in A_2$ and since $\chi=1$ on $A_2$, we deduce from \eqref{three} that
\begin{equation*}
\Phi_m\big(\sigma(\frac{2\varepsilon}{\alpha_1},u)\big)\leq \Phi_m(u)-\alpha_1\int_0^{\frac{2\varepsilon}{\alpha_1}}\chi(\sigma(t,u))dt=\Phi_m(u)-2\varepsilon.
\end{equation*}
This implies, since $\Phi_m(u)\leq c+\varepsilon$, that $\Phi_m(v)=\Phi_m(\sigma(\frac{2\varepsilon}{\alpha_1},u))\leq c-\varepsilon$. Hence (ii) is satisfied.

 It remains to verify (iv). Since $\sigma$ is odd in $u$, it suffices to show that
\begin{equation}\label{seven}
\sigma\big([0,+\infty)\times D_m^0\big)\subset D_m^0.
\end{equation}
We follow \cite{Zou06}.

\textbf{Claim:} We have
\begin{equation}\label{eight}
\sigma([0,+\infty)\times \overline{D_m^0})\subset \overline{D_m^0}.
\end{equation}
Assume by contradiction  that \eqref{seven} does not hold. Then there exist $u_0\in D_m^0$ and $t_0>0$ such that $\sigma(t_0,u_0)\notin D_m^0$. Choose a neighborhood $N_{u_0}$ of $u_0$ such that $N_{u_0}\subset D_m^0$. Then there is a neighborhood $N_0$ of $\sigma(t_0,u_0)$ such that $\sigma(t_0,\cdot):N_{u_0}\to N_0$ is a homeomorphism. Since $\sigma(t_0,u_0)\notin D_m^0$, the set $N_0\backslash \overline{D_m^0}$ is not empty. Hence there is $w\in N_{u_0}$ such that $\sigma(t_0,w)\in N_0\backslash \overline{D_m^0}$, contradicting \eqref{eight}.\\
We now terminate by giving the proof of our above claim.\\
By $(A_3)$-(i) we have $B( D_m^0\cap E_m)\subset D_m^0$, which implies that $B( \overline{D_m^0}\cap E_m)\subset  \overline{D_m^0}$. Since $K_m\cap A_1=\emptyset$, we have $\sigma(t,u)=u$ for all $t\in[0,1]$ and $u\in \overline{D^0_m}\cap K_m$.\\
Assume that $u\in \overline{D^0_m}\cap E_m$. If there is $t_1\in(0,1]$ such that $\sigma(t_1,u)\notin \overline{D_m^0}$, then there would be $s_1\in[0,t_1)$ such that $\sigma(s_1,u)\in\partial\overline{D_m^0}$ and $\sigma(t,u)\notin \overline{D_m^0}$ for all $t\in(0,t_1]$. The following Cauchy problem
\begin{equation*}
\frac{d}{dt}\mu(t,\sigma(s_1,u))=-W\big(\mu(t,\sigma(s_1,u))\big),\qquad \mu(0,\sigma(s_1,u))=\sigma(s_1,u)\in Y_m
\end{equation*}
has $\sigma(t,\sigma(s_1,u))$ as unique solution. Recalling that $W=0$ on $Y_m\backslash A_1$, we have $v-W(v)\in \overline{D_m^0}\cap (Y_m\backslash A_1)$ for any $v\in\overline{D_m^0}\cap(Y_m\backslash A_1)$.\\
Assume that $v\in A_1\cap \overline{D_m^0}$. Since $\|V(u)\|\geq\delta$, we deduce that $1-\beta\chi(v)\|V(v)\|^{-2}\geq0$ for all $\beta$ such that $0<\beta\leq \delta^2$. Recalling that $v\in\overline{D_m^0}$ implies $dist(v,P_m)\leq\mu_m$, that $V(v)=v-B(v)$, and that $aP_m+bP_m\subset P_m$ for all $a,b\geq0$ \big(because $P_m$ is a cone\big), we obtain for any $\beta\in]0,\delta^2]$
\begin{align*}
dist\big(v-\beta W(v),P_m\big)&=dist\big(v-\beta\chi(v)\|V(v)\|^{-2}V(v),P_m\big)\\
&=dist\big(v-\beta\chi\|V(v)\|^{-2}(v-B(v)),P_m\big)\\
&=dist\Big((1-\beta\chi(v)\|V(v)\|^{-2})v+\beta\chi(v)\|V(v)\|^{-2}B(v),P_m\Big)\\
&\leq dist\Big((1-\beta\chi(v)\|V(v)\|^{-2})v+\beta\chi(v)\|V(v)\|^{-2}B(v),\\
&\beta\chi(u)\|V(v)\|^{-2}P_m+(1-\beta\chi(v)\|V(v)\|^{-2})P_m\Big)\\
\leq (1&-\beta\chi(v)\|V(v)\|^{-2})dist(v,P_m)+\beta\chi(v)\|V(v)\|^{-2}dist(B(v),P_m)\\
&\leq (1-\beta\chi(v)\|V(v)\|^{-2})\mu_m+\beta\chi(v)\|V(v)\|^{-2}\mu_m\\
&=\mu_m.
\end{align*}
It follows that $v-\beta W(v)\in\overline{D_m^0}$ for $0<\beta\leq \delta^2$. This implies that
\begin{equation*}
\lim_{\substack{\beta\to0^+}}\frac{dist\big(v+\beta(- W(v)),\overline{D_m^0}\big)}{\beta}=0,\quad\forall u\in\overline{D_m^0}.
\end{equation*}
By Lemma \ref{helpful} there then exists $\delta_0>0$ such that $\sigma\big(t,\sigma(s_1,u)\big)\in \overline{D_m^0}$ for all $t\in[0,\delta_0)$. This implies that $\sigma\big(t,\sigma(s_1,u)\big)=\sigma(t+s_1,u)\in \overline{D_m^0}$ for all $t\in[0,\delta_0)$, which contradicts the definition of $s_1$. This last contradiction assures that $\sigma([0,+\infty)\times\overline{D_m^0})\subset\overline{D_m^0}$.
\end{proof}
\begin{proof}[Proof of Theorem \ref{scft}]
$(A_1)$ and $(A_2)$ imply that $a_k<b_k\leq\inf_{\substack{u\in N_k^m}}\Phi_m(u)$, for $k$ big enough. Let
\begin{equation*}
\Gamma_k^m:=\big\{\gamma\in C(B_k,Y_m)\,;\, \gamma \text{ is odd, } \gamma|_{\partial B_k}=id\text{ and }\gamma(D_m)\subset D_m\big\}.
\end{equation*}
$\Gamma_k^m$ is clearly non empty and for any $\gamma\in \Gamma_k^m$ the set $U:=\big\{u\in B_k\,;\,\|\gamma(u)\|<r_k\big\}$ is an open bounded and symmetric \big(i.e. $-U=U$\big) neighborhood of the origin in $Y_k$. By the Borsuk-Ulam theorem the continuous odd map $\Pi_k\circ\gamma:\partial U\subset Y_k\to Y_{k-1}$ has a zero, where $\Pi_k:X\to Y_{k-1}$ is the orthogonal projection. It then follows that $\gamma(B_k)\cap N_k^m\neq\emptyset$ and, since $N_k^m\subset S_m$, that $\gamma(B_k)\cap S_m\neq\emptyset$. This intersection property implies that
\begin{equation*}
c_{k,m}:=\inf_{\gamma\in\Gamma_k^m}\max_{u\in\gamma(B_k)\cap S_m}\Phi_m(u)\geq \inf_{\substack{u\in N^m_k}}\Phi(u)\geq b_k.
\end{equation*}
We would like to show that for any $\varepsilon_0\in]0,\frac{c_{k,m}-a_k}{2}[$, there exists $u\in\Phi_m^{-1}\big([c_{k,m}-2\varepsilon_0,c_{k,m}+2\varepsilon_0]\big)\cap V_{\frac{\mu_m}{2}}(S_m)$ such that $\|\Phi'_m(u)\|<\varepsilon_0$.\\
Arguing by contradiction, we assume that we can find $\varepsilon_0\in]0,\frac{c_{k,m}-a_k}{2}[$ such that 
\begin{equation*}
\|\Phi'_m(u)\|\geq\varepsilon_0,\quad\forall u\in\Phi_m^{-1}\big([c_{k,m}-2\varepsilon_0,c_{k,m}+2\varepsilon_0]\big)\cap V_{\frac{\mu_m}{2}}(S_m).
\end{equation*}
Apply Lemma \ref{defor} with $c=c_{k,m}$ and define, using the deformation $\eta$ obtained, the map
\begin{equation*}
\theta:B_k\to Y_m,\qquad \theta(u):=\eta(1,\gamma(u)),
\end{equation*}
where $\gamma\in\Gamma_k^m$ satisfies
\begin{equation}\label{six}
\max_{\substack{u\in\gamma(B_k)\cap S_m}}\Phi_m(u)\leq c_{k,m}+\varepsilon,
\end{equation}
with $\varepsilon$ also given by Lemma \ref{defor}.\\
Using the properties of $\eta$ (see Lemma \ref{defor}), one can easily verify that $\theta\in\Gamma^m_k$.\\
On the other hand, we have
\begin{equation}\label{five}
\eta\big(1,\gamma(B_k)\big)\cap S_m\subset \eta\big(1,\Phi_m^{-1}(]-\infty,c_{k,m}+\varepsilon])\cap S_m\big).
\end{equation}
In fact, if $u\in \eta\big(1,\gamma(B_k)\big)\cap S_m$ then $u=\eta\big(1,\gamma(v)\big)\in S_m$ for some $v\in B_k$. Observe that $\gamma(v)\in S_m$. Indeed, if this is not true then $\gamma(v)\in D_m$, and by (iv) of Lemma \ref{defor} we obtain $u=\eta(1,\gamma(v))\in D_m$ which contradicts the fact that $u\in S_m$. Now \eqref{six} implies that $\gamma(v)\in\Phi_m^{-1}(]-\infty,c_{k,m}+\varepsilon])$. It then follows, using (ii) of Lemma \ref{defor}, that $u=\eta(1,\gamma(v))\in\eta\big(1,]-\infty,c_{k,m}+\varepsilon]\cap S_m\big)$. Hence \eqref{five} holds.\\
Using \eqref{five} and (ii) of Lemma \ref{defor}, we obtain
\begin{align*}
\max_{\substack{u\in\theta(B_k)\cap S_m}}\Phi_m(u)&=\max_{\substack{u\in\eta\big(1,\gamma(B_k)\big)\cap S_m}}\Phi_m(u)\\
&\leq \max_{\substack{u\in\eta\big(1,\Phi_m^{-1}(]-\infty,c_{k,m}+\varepsilon])\cap S_m\big)}}\Phi_m(u)\\
&\leq c_{k,m}-\varepsilon,
\end{align*}
contradicting the definition of $c_{k,m}$.\\
The above contradiction assures that for any $\varepsilon_0\in]0,\frac{c_{k,m}-a_k}{2}[$, there exists $u\in\Phi_m^{-1}\big([c_{k,m}-2\varepsilon_0,c_{k,m}+2\varepsilon_0]\big)\cap V_{\frac{\mu_m}{2}}(S_m)$ such that $\|\Phi'_m(u)\|<\varepsilon_0$.\\
It follows that there is a sequence $(u^n_{k,m})_n\subset V_{\frac{\mu_m}{2}}(S_m)$ such that
\begin{equation*}
\Phi'_m(u^n_{k,m})\to0\text{ and }\Phi_m(u^n_{k,m})\to c_{k,m},\text{ as }n\to\infty.
\end{equation*}
We deduce from $(A_4)$-(i), using the fact $Y_m$ is finite-dimensional, that there exists $u_{k,m}\in V_{\frac{\mu_m}{2}}(S_m)$ such that
\begin{equation*}
 \Phi'_m(u_{k,m})=0\text{ and }\Phi_m(u_{k,m})=c_{k,m}.
\end{equation*}
Noting that $c_{k,m}\leq\max_{u\in B_k}\Phi(u)$, we deduce using $(A_4)$-(ii) that $\Phi$ has a sign-changing critical point $u_k$ such that $b_k\leq\Phi(u_k)\leq \max_{u\in B_k}\Phi(u)$. Since $b_k\to\infty$, as $k\to\infty$, the conclusion follows.
\end{proof}
%%%%%%%%%%%%%%%%%%%%%%%%%%%%%%%%%%%%%%%%%%%%%%%%%%%%
%%%%%%%%%%%%%%%%%%%%%%%%%%%%%%%%%%%%%%%%%%%%%%%%%%%%
\section{Proof of the main result}\label{trois}
%%%%%%%%%%%%%%%%%%%%%%%%%%%%%%%%%%%%%%%%%%%%%%%%%%%%
%%%%%%%%%%%%%%%%%%%%%%%%%%%%%%%%%%%%%%%%%%%%%%%%%%%%
Throughout this section, we assume that $(f_{1,2,3,4})$ are satisfied. We denote by $|\cdot|_q$ the usual norm of the Lebesgue space $L^q(\Omega)$.

 Let $X:=H_0^1(\Omega)$ be the usual Sobolev space endowed with the inner product
\begin{equation*}
\big<u,v\big>=\int_\Omega\nabla u\nabla vdx
\end{equation*}
and norm $\|u\|^2=\big<u,u\big>$, for $u,v\in H_0^1(\Omega)$.

 It is well known that solutions of \eqref{s} are critical points of the functional 
\begin{equation}
\Phi(u)=\frac{a}{2}\|u\|^2+\frac{b}{4}\|u\|^4-\int_\Omega F(x,u)dx,\quad u\in X:=H_0^1(\Omega).
\end{equation}
By a standard argument, one can easily verify that $\Phi$ is of class $C^1$ and 
\begin{equation}
\big<\Phi'(u),v\big>=\big(a+b\|u\|^2\big)\int_\Omega\nabla u\nabla vdx-\int_\Omega vf(x,u)dx
\end{equation}

 Let $0<\lambda_1<\lambda_2<\lambda_3<\cdots$ be the distinct eigenvalues of the problem
\begin{equation*}
-\Delta u=\lambda u\quad\text{in }\Omega,\qquad u=0\quad\text{on }\partial\Omega.
\end{equation*}
 Then each $\lambda_j$ has finite multiplicity. It is well known that the principal eigenvalue $\lambda_1$ is simple with a positive eigenfunction $e_1$, and the eigenfunctions $e_j$ corresponding to $\lambda_j$ ($j\geq2$) are sign-changing. Let $X_j$ be the eigenspace associated to $\lambda_j$. We set for $k\geq2$
\begin{equation*}
Y_k:=\oplus_{j=1}^k X_j\text{ and } Z_k=\overline{\oplus_{j=k}^\infty X_j}. 
\end{equation*}
\begin{lem}\label{akbk}\quad
\begin{enumerate}
\item[(1)] For any $u\in Y_k$ we have $\Phi(u)\to-\infty$, as $\|u\|\to\infty$.
\item[(2)] There exists $r_k>0$ such that 
\begin{equation*}
\inf_{\substack{u\in Z_k\\\|u\|=r_k}}\Phi(u)\to\infty,\text{ as }k\to\infty.
\end{equation*}
\end{enumerate}
\end{lem}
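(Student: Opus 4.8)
The two claims are the standard geometric ingredients for a fountain-type argument, adapted to the Kirchhoff functional, and I would prove them separately by exploiting the quartic growth of the leading term against the polynomial growth of $F$ coming from $(f_1)$ together with the Ambrosetti--Rabinowitz condition $(f_3)$.

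For part (1), the plan is to use $(f_3)$ to get the superquartic lower bound on $F$. Integrating $\mu F(x,s)\le sf(x,s)$ in the usual way gives, for $|s|\ge 1$, an estimate $F(x,s)\ge d_1|s|^\mu - d_2$ with $d_1>0$ and $\mu>4$, hence $F(x,u)\ge d_1|u|^\mu - d_3$ for all $(x,u)$ after absorbing the bounded contribution near $s=0$ (which is controlled by $(f_1)$--$(f_2)$). Then for $u\in Y_k$,
\begin{equation*}
\Phi(u)\le \frac{a}{2}\|u\|^2+\frac{b}{4}\|u\|^4-d_1\int_\Omega|u|^\mu\,dx+d_3|\Omega|.
\end{equation*}
Since $Y_k$ is finite-dimensional all norms on it are equivalent, so $\int_\Omega|u|^\mu\,dx\ge c_k\|u\|^\mu$ for some $c_k>0$, and because $\mu>4$ the negative term dominates both $\|u\|^2$ and $\|u\|^4$ as $\|u\|\to\infty$; this gives $\Phi(u)\to-\infty$ on $Y_k$. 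This step is routine; the only thing to be careful about is that the constant $c_k$ depends on $k$, which is harmless here.

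For part (2), the plan is the classical Bartsch argument. Set
\begin{equation*}
\beta_k:=\sup_{\substack{u\in Z_k\\ \|u\|=1}}|u|_p,
\end{equation*}
and recall the standard fact that $\beta_k\to 0$ as $k\to\infty$ (this follows because $H_0^1(\Omega)\hookrightarrow L^p(\Omega)$ is compact for the admissible range of $p$ in $(f_1)$, so any weakly null sequence in the $Z_k$'s goes to zero in $L^p$; one argues by contradiction extracting a weakly convergent sequence with weak limit orthogonal to every $X_j$). From $(f_1)$ and $(f_2)$ one has $F(x,u)\le \varepsilon|u|^2 + C_\varepsilon|u|^p$, so for $u\in Z_k$ with $\|u\|=r_k$,
\begin{equation*}
\Phi(u)\ge \frac{a}{2}\|u\|^2 - \varepsilon|u|_2^2 - C_\varepsilon|u|_p^p \ge \Big(\frac{a}{2}-\varepsilon C\Big)r_k^2 - C_\varepsilon\beta_k^p r_k^p,
\end{equation*}
where I dropped the nonnegative $\frac{b}{4}\|u\|^4$ term and used $|u|_2\le C\|u\|$. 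Choosing $\varepsilon$ small so that $a/2-\varepsilon C\ge a/4$, and then optimizing in $r_k$ — the natural choice is $r_k=(a/(4pC_\varepsilon\beta_k^p))^{1/(p-2)}$, so that $r_k\to\infty$ since $\beta_k\to 0$ and $p>2$ — yields $\Phi(u)\ge \kappa r_k^2$ for a constant $\kappa>0$, which tends to $+\infty$ with $k$.

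The main obstacle, if any, is purely bookkeeping: making sure the chosen $r_k$ is compatible with the later requirement $0<r_k<\rho_k$ (the $\rho_k$ in part (1)/in $(A_1)$ can be taken after $r_k$ is fixed, since part (1) gives $\Phi\to-\infty$ on $Y_k$ so a large enough $\rho_k$ always exists), and verifying that $\beta_k\to 0$ in the Kirchhoff setting — but since the leading operator is still the Dirichlet Laplacian and the eigenspace decomposition is the usual one, this is the same compactness argument as in the semilinear case and presents no new difficulty. The presence of $b>0$ only helps, since $\frac{b}{4}\|u\|^4\ge 0$ can simply be discarded in the lower bounds.
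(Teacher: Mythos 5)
Your proposal is correct and follows essentially the same route as the paper: integrate the Ambrosetti--Rabinowitz condition to get a superquartic lower bound on $F$ and use norm equivalence on the finite-dimensional $Y_k$ for part (1), and for part (2) drop the $\frac{b}{4}\|u\|^4$ term, invoke $\beta_k\to 0$ (Bartsch's lemma, Lemma 3.8 in Willem), and optimize the radius $r_k$ to conclude. The only cosmetic difference is that the paper bounds $\int_\Omega F$ using $(f_1)$ alone (picking up an additive constant $c_6$), whereas you use the $(f_1)$--$(f_2)$ $\varepsilon$-estimate; both are standard and lead to the same choice of $r_k$ up to a constant factor.
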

\begin{proof}
(1)\quad It is well known that integrating $(f_3)$ yields the existence of two constants $c_1,c_2>0$ such that $F(x,u)\geq c_1|u|^\mu-c_2$. This together with the fact that all norms are equivalent in the finite-dimensional subspace $Y_k$ imply that
\begin{equation*}
\Phi(u)\leq \frac{a}{2}\|u\|^2+\frac{b}{4}\|u\|^4-c_3\|u\|^\mu+c_4,\quad\forall u\in Y_k,
\end{equation*}
where $c_3,c_4>0$ are constant. Since $\mu>4$, it follows that $\Phi(u)\to-\infty$, as $\|u\|\to\infty$. \\
(2)\quad Using $(f_1)$, we obtain
\begin{equation*}
\Phi(u)\geq \frac{a}{2}\|u\|^2-c_5|u|_p^p-c_6, \quad\forall u\in X,
\end{equation*}
where $c_5,c_6>0$ are constant. Set
\begin{equation*}
\beta_k:=\sup_{\substack{v\in Z_k\\\|v\|=1}}|v|_p.
\end{equation*}
 Then we obtain
 \begin{equation*}
 \Phi(u)\geq a\big(\frac{1}{2}-\frac{1}{p}\big)\big(\frac{c_5}{a}p\beta_k^p\big)^{\frac{2}{2-p}}-c_6
 \end{equation*}
 for every $u\in Z_k$ such that
\begin{equation*}
\|u\|=r_k:=\big(\frac{c_5}{a}p\beta_k^p\big)^{\frac{1}{2-p}}.
\end{equation*} 
We know from Lemma $3.8$ in \cite{W} that $\beta_k\to0$, as $k\to\infty$. This implies that $r_k\to\infty$, as $k\to\infty$.
\end{proof}
Now we fix $k$ large enough and we set for $m>k+2$
\begin{equation*}
\Phi_m:=\Phi|_{Y_m},\, K_m:=\big\{u\in Y_m\,;\, \Phi'_m(u)=0\big\},\, E_m:=Y_m\backslash K_m,
\end{equation*}
\begin{equation*}
P_m:=\big\{u\in Y_m\,;\,u(x)\geq0\big\},\,\, Z^m_k:=\oplus_{j=k}^m X_j,\,\text{ and }  N^m_k:=\big\{u\in Z^m_k\,|\,\|u\|=r_k\big\}.
\end{equation*}
Remark that for all $u\in P_m\backslash\big\{0\big\}$ we have $\int_\Omega ue_1dx>0$, while for all $u\in Z_k$, $\int_\Omega ue_1dx=0$, where $e_1$ is the principal eigenfunction of the Laplacian. This implies that $P_m\cap Z_k=\big\{0\big\}$. It then follows, since $N_k^m$ is compact, that
\begin{equation}\label{deltam}
\delta_m:=dist\big(N_k^m,-P_m\cup P_m\big)>0.
\end{equation}
For $u\in Y_m$ fixed, we consider the functional
\begin{equation}
I_u(v)=\frac{1}{2}\big(a+b\|u\|^2\big)\int_\Omega|\nabla v|^2dx-\int_\Omega vf(x,u)dx,\quad u\in Y_m.
\end{equation}
It is not difficult to see that $I_u$ is of class $C^1$, coercive, bounded below, weakly lower semicontinuous, and strictly convex. Therefore $I_u$ admits a unique minimizer $v=Au\in Y_m$, which is the unique solution to the problem
\begin{equation*}
-\big(a+b\|u\|^2\big)\Delta v=f(x,u),\quad v\in Y_m.
\end{equation*} 
Clearly, the set of fixed points of $A$ coincide with $K_m$. Moreover, the operator $A:Y_m\to Y_m$ has the following important properties.
\begin{lem}\label{Aop}\quad
\begin{enumerate}
\item[(1)] $A$ is continuous and maps bounded sets to bounded sets.
\item[(2)] For any $u\in Y_m$ we have
\begin{align}
\big<\Phi_m'(u),u-Au\big>\geq a\|u-Au\|^2, \label{aa1}\\
\|\Phi_m'(u)\|\leq (a+b)\big(1+\|u\|^2\big)\|u-Au\|.\label{aa2}
\end{align}
\item[(3)] There exists $\mu_m\in]0,\delta_m[$ such that $A(\pm D^0_m)\subset \pm D^0_m$, where $\delta_m$ is defined by \eqref{deltam}.
\end{enumerate}
\end{lem}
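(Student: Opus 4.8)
The strategy is to exploit the variational characterization of $Au$ as the unique minimizer (equivalently, the unique critical point) of the strictly convex coercive functional $I_u$ on the finite-dimensional space $Y_m$, together with elementary properties of the superposition operator $v\mapsto f(\cdot,v)$ and the compact embedding of $H_0^1(\Omega)$ into the relevant Lebesgue spaces.

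\textbf{Part (1).} Since $Au$ solves $-(a+b\|u\|^2)\Delta v=f(x,u)$ in $Y_m$, testing this equation with $Au$ itself gives $(a+b\|u\|^2)\|Au\|^2=\int_\Omega (Au)f(x,u)\,dx$. Using $(f_1)$ and the continuous embeddings $H_0^1(\Omega)\hookrightarrow L^p(\Omega)$ and $H_0^1(\Omega)\hookrightarrow L^{p'}(\Omega)$ (valid since $p<6$ for $N=3$), one bounds $\int_\Omega (Au)f(x,u)\,dx\le C\|Au\|(1+\|u\|^{p-1})$, whence $a\|Au\|\le (a+b\|u\|^2)\|Au\|\le C(1+\|u\|^{p-1})$; this gives boundedness of $A$ on bounded sets. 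For continuity, take $u_n\to u$ in $Y_m$; since $Y_m$ is finite-dimensional the sequence $(Au_n)$ is bounded, hence (again by finite-dimensionality) has a convergent subsequence $Au_{n_j}\to w$. Passing to the limit in $(a+b\|u_{n_j}\|^2)\langle Au_{n_j},\varphi\rangle=\int_\Omega \varphi f(x,u_{n_j})\,dx$ for each $\varphi\in Y_m$ — using the continuity of the Nemytskii operator $L^p\to L^{p'}$ under $(f_1)$ — shows $w$ solves the limiting equation, so $w=Au$ by uniqueness; since every subsequence has a further subsequence converging to $Au$, the whole sequence converges.

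\textbf{Part (2).} For \eqref{aa1}, compute directly: $\langle\Phi_m'(u),u-Au\rangle=(a+b\|u\|^2)\langle u,u-Au\rangle-\int_\Omega(u-Au)f(x,u)\,dx$, and since $Au$ satisfies $(a+b\|u\|^2)\langle Au,u-Au\rangle=\int_\Omega(u-Au)f(x,u)\,dx$, subtraction yields $\langle\Phi_m'(u),u-Au\rangle=(a+b\|u\|^2)\|u-Au\|^2\ge a\|u-Au\|^2$. For \eqref{aa2}, for any $\varphi\in Y_m$ with $\|\varphi\|=1$ write $\langle\Phi_m'(u),\varphi\rangle=(a+b\|u\|^2)\langle u,\varphi\rangle-\int_\Omega\varphi f(x,u)\,dx=(a+b\|u\|^2)\langle u-Au,\varphi\rangle$, using again that $Au$ solves its equation tested against $\varphi$; hence $|\langle\Phi_m'(u),\varphi\rangle|\le(a+b\|u\|^2)\|u-Au\|\le(a+b)(1+\|u\|^2)\|u-Au\|$, and taking the supremum over $\varphi$ gives the claim.

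\textbf{Part (3).} This is the main obstacle. The goal is to find $\mu_m\in\,]0,\delta_m[$ with $A(\pm D_m^0)\subset\pm D_m^0$, i.e.\ $A$ maps the open $\mu_m$-neighborhood of the positive cone $P_m$ into itself (and likewise for $-P_m$, by oddness of $f$). The key sub-claim is that $A$ maps $P_m$ strictly into $P_m$ in a quantitative sense: if $u\ge0$ then $f(x,u)$ need not be signed, but one should decompose and use that the negative part is controlled. More precisely, following the auxiliary-operator technique of \cite{LLW,LZW}, one estimates $\mathrm{dist}(Au,P_m)$ for $u$ near $P_m$: writing $v=Au$ and testing the equation for $v$ with $-v^-:=-\max(-v,0)$ gives $(a+b\|u\|^2)\|v^-\|^2=-\int_\Omega v^- f(x,u)\,dx=-\int_{\{v<0\}}v^- f(x,u)\,dx$. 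One then splits $u=u^+-u^-$ and uses $(f_1)$–$(f_2)$ to show $|f(x,u)|\le \varepsilon|u^+| + C_\varepsilon|u^+|^{p-1} + (\text{terms controlled by }\|u^-\|)$ on the relevant set; combined with $\mathrm{dist}(u,P_m)\le\|u^-\|$ and the Sobolev/Poincaré inequalities on the finite-dimensional $Y_m$, this yields an estimate of the form $\mathrm{dist}(Au,P_m)\le \|v^-\|\le g(\|u\|)\,\mathrm{dist}(u,P_m)$ with $g$ bounded on bounded sets — or, more robustly, $\mathrm{dist}(Au,P_m)\le \tfrac12\,\mathrm{dist}(u,P_m)$ once we also localize to a bounded region (which is legitimate because in the application one only needs the invariance where it matters; alternatively one argues directly with $\mu_m$ small). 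Hence choosing $\mu_m>0$ small enough — smaller than $\delta_m$ and small enough that $g(\cdot)$ times $\mu_m$ stays below $\mu_m$ on the relevant set — gives $A(D_m^0)\subset D_m^0$; the inclusion for $-D_m^0$ follows since $(f_4)$ makes $A$ odd, and $D_m=D_m^0\cup(-D_m^0)$. The delicate point throughout is controlling the sign and size of $f(x,u)$ on $\{Au<0\}$ uniformly for $u$ in a $\mu_m$-tube around $P_m$; here the subcriticality in $(f_1)$, the decay in $(f_2)$, and the equivalence of all norms on the finite-dimensional $Y_m$ are exactly what make the estimate close.
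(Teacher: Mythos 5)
Your treatment of parts (1) and (2) is correct. For (1) you use a boundedness--plus--uniqueness subsequence argument where the paper instead subtracts the defining equations for $Au_n$ and $Au$, tests with $v_n - v$, and estimates directly (using the $L^{p/(p-1)}$ continuity of the Nemytskii operator); both work in the finite-dimensional $Y_m$. Part (2) matches the paper's computation exactly.

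Part (3) follows the paper's overall strategy (derive a contraction-type bound $\mathrm{dist}(Au,\pm P_m)\le C\big(\varepsilon\,\mathrm{dist}(u,\pm P_m)+c_\varepsilon\,\mathrm{dist}(u,\pm P_m)^{p-1}\big)$ from $(f_1)$--$(f_2)$, then pick $\varepsilon$ and $\mu_m$ small), but your sketch has a gap at the decisive step. You write ``if $u\ge0$ then $f(x,u)$ need not be signed'' and propose to split $u=u^+-u^-$ and collect ``terms controlled by $\|u^-\|$''; this is backwards and will not close. The crucial ingredient you omit is the \emph{sign condition} hidden in $(f_3)$: since $0<\mu F(x,u)\le uf(x,u)$ for $u\ne0$, $f(x,u)$ has the same sign as $u$. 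When estimating $\|v^+\|$ via $(a+b\|u\|^2)\|v^+\|^2=\int_\Omega v^+ f(x,u)\,dx$, this sign information lets one discard the set $\{u\le0\}$ (there $v^+f(x,u)\le0$), so only $u^+$ survives, and only $u^+$ --- never $u^-$ or $\|u\|$ --- enters the bound; $|u^+|_q$ is then controlled by $\mathrm{dist}(u,-P_m)$ because $u^+\le|u-w|$ pointwise for every $w\in -P_m$. Without invoking this sign structure, one only gets a bound in terms of $|u|_q$, which is not dominated by $\mathrm{dist}(u,\pm P_m)$. Your worry about ``localizing to a bounded region'' is also unnecessary: the factor $(a+b\|u\|^2)\ge a$ sits on the left of the energy identity, so dividing through gives an estimate on $\|v^\pm\|$ that is uniform in $\|u\|$ and hence uniform on all of $Y_m$, exactly as in the paper.
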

\begin{proof}
(1)\quad Let $(u_n)\subset Y_m$ such that $u_n\to u$. We set $v_n=Au_n$ and $v=Au$.\\
By the definition of $A$ we have for any $w\in Y_m$
\begin{align}
\big(a+b\|u_n\|^2\big)\int_\Omega\nabla v_n\nabla wdx&=\int_\Omega wf(x,u_n)dx\label{e1}\\
\big(a+b\|u\|^2\big)\int_\Omega\nabla v\nabla wdx&=\int_\Omega wf(x,u)dx.\label{e2}
\end{align}
Taking $w=v_n-v$ in \eqref{e1} and in \eqref{e2}, and using the H\"{o}lder inequality and the Sobolev embedding theorem, we obtain
\begin{align*}
\big(a+b\|u_n\|^2\big)\|v_n-v\|^2&=b\big(\|u_n\|^2-\|u\|^2\big)\int_\Omega\nabla v\nabla(v_n-v)dx\\
&\qquad\qquad\qquad\qquad\qquad+\int_\Omega(v-v_n)\big(f(u_n)-f(u)\big)dx\\
&\leq c_1\big|\|u_n\|^2-\|u\|^2\big|\|v\|\|v_n-v\|+c_2\|v_n-v\||f(u_n)-f(u)|_{\frac{p}{p-1}},
\end{align*}
where $c_1,c_2>0$ are constant. By $(f_1)$ and Theorem $A.2$ in \cite{W}, we have $f(u_n)- f(u)\to0$ in $L^{\frac{p}{p-1}}(\Omega)$. Hence $\|Au_n-Au\|=\|v_n-v\|\to0$, that is, $A$ is continuous.

 On the other hand, for any $u\in Y_m$ we have, taking $v=w=Au$ in \eqref{e2}
\begin{equation*}
\big(a+b\|u\|^2\big)\|Au\|^2=\int_\Omega Auf(x,u)dx.
\end{equation*}
By using $(f_1)$, the H\"{o}lder inequality, and the Sobolev embedding theorem, we obtain
\begin{equation*}
a\|Au\|\leq C\big(1+\|u\|^{p-1}),
\end{equation*}
where $C>0$ a constant. This shows that $Au$ is bounded whenever $u$ is bounded.\\
(2)\quad Taking $w=u-Au$ in \eqref{e2}, we obtain
\begin{equation*}
\big(a+b\|u\|^2\big)\int_\Omega\nabla(Au)\nabla(u-Au)dx=\int_\Omega(u-Au)f(x,u)dx,
\end{equation*}
which implies that
\begin{equation*}
\big<\Phi_m'(u),u-Au\big>=\big(a+b\|u\|^2\big)\|u-Au\|^2\geq a\|u-Au\|^2.
\end{equation*}
On the other hand, we obtain using \eqref{e2}
\begin{align*}
\big<\Phi'_m(u),w\big>&=\big(a+b\|u\|^2\big)\int_\Omega\nabla u\nabla wdx-\int_\Omega wf(x,u)dx\\
&=\big(a+b\|u\|^2\big)\int_\Omega\nabla (u-Au)\nabla wdx,\quad\forall w\in Y_m.
\end{align*}
This implies that 
\begin{equation*}
\|\Phi'_m(u)\|\leq \big(a+b\|u\|^2\big)\|u-Au\|.
\end{equation*}
(3)\quad It follows from $(f_1)$ and $(f_2)$ that
\begin{equation}\label{feps}
\forall\varepsilon>0,\quad\exists c_\varepsilon>0\,\,;\,\, |f(x,t)|\leq \varepsilon|t|+c_\varepsilon|t|^{p-1},\quad\forall t\in\mathbb{R}.
\end{equation}
Let $u\in Y_m$ and let $v=Au$. As usual we denote $w^\pm=\max\{0,\pm w\}$, for any $w\in X$.\\
Taking $w=v^+$ in \eqref{e2} and using the H\"{o}lder inequality, we obtain
\begin{equation*}
\big(a+b\|u\|^2\big)\|v^+\|^2=\int_\Omega v^+f(x,u)dx\leq \varepsilon|u^+|_2|v^+|_2+c_\varepsilon|u^+|_p^{p-1}|v^+|_p,
\end{equation*}
which implies that
\begin{equation}\label{v}
\|v^+\|^2\leq \frac{1}{a}\Big(\varepsilon|u^+|_2|v^+|_2+c_\varepsilon|u^+|_p^{p-1}|v^+|_p\Big).
\end{equation}
On the other hand it is not difficult to see that $|u^+|_q\leq |u-w|_q$ for all $w\in -P_m$ and $1\leq q\leq 2^\star$. Hence there is a constant $c_1=c_1(q)>0$ such that $|u^+|_q\leq c_1 dist(u,-P_m)$. It is obvious that $dist(v,-P_m)\leq \|v^+\|$. So we deduce from \eqref{v} and the Sobolev embedding theorem that
\begin{align*}
dist(v,-P_m)\|v^+\|&\leq \|v^+\|^2\\
&\leq c_2\Big(\varepsilon dist(u,-P_m)+c_\varepsilon dist(u,-P_m)^{p-1}\Big)\|v^+\|,
\end{align*}
where $c_2>0$ is constant. This implies that 
\begin{equation*}
dist(v,-P_m)\leq  c_2\Big(\varepsilon dist(u,-P_m)+c_\varepsilon dist(u,-P_m)^{p-1}\Big).
\end{equation*}
Similarly on can show that
\begin{equation*}
dist(v,P_m)\leq  c_3\Big(\varepsilon dist(u,P_m)+c_\varepsilon dist(u,P_m)^{p-1}\Big),
\end{equation*}
for some constant $c_3>0$.\\
Choosing $\varepsilon$ small enough, we can then find $\mu_m\in]0,\delta_m[$ such that
\begin{equation*}
dist(v,\pm P_m)\leq \frac{1}{2} dist(u,\pm P_m)
\end{equation*}
 whenever $dist(u,\pm P_m)<\mu_m$.
\end{proof}
Using $\mu_m$ obtained above, we define
\begin{equation*}
\pm D_m^0:=\big\{u\in Y_m\,|\, dist\big(u,\pm P_m\big)<\mu_m\big\},\,\,  D_m=D_m^0\cup(-D_m^0)
\end{equation*}
\begin{equation*}
 S_m:=Y_m\backslash D_m.
\end{equation*}
\begin{rem}\label{a2}
$\mu_m<\delta_m\Longrightarrow N_k^m\subset S_m.$
\end{rem}
The vector field $A:Y_m\to Y_m$ does not satisfy the assumption $(A_3)$ of Theorem \ref{scft} as it is not locally Liptschitz continuous. However, it will be used in the spirit of \cite{BL04} to construct a vector field which will satisfy the above mentioned condition. 
\begin{lem}\label{Bop}
There exists an odd locally Lipschitz continuous operator $B:E_m\to Y_m$ such that
\begin{enumerate}
\item[(1)] $\big<\Phi'(u),u-B(u)\big>\geq \frac{1}{2}\|u-A(u)\|^2$, for any $u\in E_m$.
\item[(2)] $\frac{1}{2}\|u-B(u)\|\leq \|u-A(u)\|\leq 2\|u-B(u)\|$, for any $u\in E_m$.
\item[(3)] $B\big((\pm D^0_m)\cap E_m\big)\subset \pm D^0_m$.
\end{enumerate} 
\end{lem}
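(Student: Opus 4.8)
The plan is to regularize the (merely continuous) operator $A$ into a locally Lipschitz one $B$ via a locally finite partition of unity on the paracompact metric space $E_m$, choosing the values of $B$ at each point of the partition to be nearby values of $A$, and then to force oddness and cone-invariance by a symmetrization trick. Concretely, I would proceed as follows. First, using the continuity of $A$ (Lemma \ref{Aop}(1)) together with inequalities \eqref{aa1}--\eqref{aa2}, observe that for each $u\in E_m$ we have $u-Au\neq 0$, and that $v\mapsto \langle\Phi_m'(v),v-Av\rangle$, $v\mapsto \|v-Av\|$ depend continuously on $v$; hence for each $u\in E_m$ there is an open ball $U_u\subset E_m$ on which simultaneously $\langle\Phi_m'(u),w-Aw\rangle > \tfrac12\|u-Au\|^2$ fails to be violated by much — more precisely, on which $\|Aw-Au\|$ is so small that replacing $Aw$ by the constant $Au$ keeps both the energy-decrease estimate (a perturbation of \eqref{aa1}) and the norm-comparison \eqref{aa2}-type estimate valid with the stated constants $\tfrac12$ and $2$. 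Shrinking $U_u$ further, we may also assume, when $u\in (\pm D_m^0)\cap E_m$, that $U_u\subset (\pm D_m^0)\cap E_m$ and (by Lemma \ref{Aop}(3)) that $dist(Aw,\pm P_m)<\mu_m$ for all $w\in U_u$, so that the constant value $Au$ lies in $\pm D_m^0$.

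Next I would take a locally finite open refinement $\{U_i\}_{i\in\mathcal I}$ of $\{U_u\}_{u\in E_m}$ with a subordinate locally Lipschitz partition of unity $\{\pi_i\}$ (such partitions exist on metric spaces, and on a finite-dimensional space one may even use smooth ones), pick a point $u_i$ in each $U_i$ with $U_i\subset U_{u_i}$, and define
\begin{equation*}
\widetilde B(u):=\sum_{i\in\mathcal I}\pi_i(u)\,Au_i,\qquad u\in E_m.
\end{equation*}
Being a locally finite convex combination of locally Lipschitz functions $\pi_i$ with constant vectors, $\widetilde B$ is locally Lipschitz. For fixed $u$, every index $i$ with $\pi_i(u)>0$ satisfies $u\in U_i\subset U_{u_i}$, so on that neighborhood the estimates above hold with center $u_i$; since the set $\pm D_m^0$ is convex (a translate/neighborhood of the convex cone $\pm P_m$), a convex combination of points $Au_i\in\pm D_m^0$ again lies in $\pm D_m^0$, which gives property (3) for $\widetilde B$. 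The key point is that the estimates \eqref{aa1}, \eqref{aa2} are \emph{convex} in the target variable: $\langle\Phi_m'(u),u-\widetilde B(u)\rangle = \sum_i\pi_i(u)\langle\Phi_m'(u),u-Au_i\rangle \ge \sum_i\pi_i(u)\cdot\tfrac12\|u-Au\|^2 = \tfrac12\|u-Au\|^2$, and similarly $\|u-\widetilde B(u)\|\le\sum_i\pi_i(u)\|u-Au_i\|\le 2\|u-Au\|$ and $\ge\tfrac12\|u-Au\|$, provided at the previous step we chose the neighborhoods small enough that each individual term obeys these bounds. This yields properties (1) and (2) for $\widetilde B$.

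Finally, to make the operator odd I symmetrize: since $\Phi_m$ is even, $\Phi_m'$ is odd, $A$ is odd (it is the minimizer of $I_u$ and $I_{-u}(-v)=I_u(v)$), and $E_m$, $\pm D_m^0$ are symmetric with $-(\pm D_m^0)=\mp D_m^0$, I set
\begin{equation*}
B(u):=\tfrac12\bigl(\widetilde B(u)-\widetilde B(-u)\bigr).
\end{equation*}
Then $B$ is odd and locally Lipschitz. Oddness of $\Phi_m'$ and $A$ propagates all three estimates through the averaging: $\langle\Phi_m'(u),u-B(u)\rangle=\tfrac12\langle\Phi_m'(u),u-\widetilde B(u)\rangle+\tfrac12\langle\Phi_m'(-u),-u-\widetilde B(-u)\rangle\ge\tfrac12\|u-Au\|^2$ (using $\|-u-A(-u)\|=\|u-Au\|$), property (2) follows from the triangle inequality together with the corresponding bounds at $u$ and $-u$, and property (3) holds because if $u\in(\pm D_m^0)\cap E_m$ then $\widetilde B(u)\in\pm D_m^0$ while $-u\in(\mp D_m^0)\cap E_m$ so $\widetilde B(-u)\in\mp D_m^0$, hence $-\widetilde B(-u)\in\pm D_m^0$, and the midpoint of two points of the convex set $\pm D_m^0$ stays in $\pm D_m^0$. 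The main obstacle I anticipate is the bookkeeping in the first step: one must choose the local neighborhoods $U_u$ small enough that the \emph{strict} inequality with constant $a$ in \eqref{aa1} degrades only to the constant $\tfrac12$ claimed in (1) (and similarly that \eqref{aa2}'s comparison degrades only to the factors $\tfrac12,2$ in (2)) uniformly over all $w\in U_u$ — this requires using the continuity of $u\mapsto\|u-Au\|$ and $u\mapsto\Phi_m'(u)$ to absorb the error $\|Aw-Au\|$, and it is where the finite-dimensionality of $Y_m$ and the boundedness properties from Lemma \ref{Aop}(1) are used.
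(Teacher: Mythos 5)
Your overall strategy is the same as the paper's, which in turn follows Bartsch and Liu \cite{BL04}: freeze the merely continuous operator $A$ on small balls where it does not vary by more than a fraction of $\|u-Au\|$ and $\frac{a}{2(a+b)}(1+\|u\|^2)^{-1}\|u-Au\|$, glue the frozen constants with a locally finite Lipschitz partition of unity, and symmetrize $B(u)=\tfrac12(\widetilde B(u)-\widetilde B(-u))$ to obtain oddness. Your convexity computations for (1)--(2) and the averaging argument are sound, as is the remark that $\pm D_m^0$ is convex, being a sublevel set of the convex function $\operatorname{dist}(\cdot,\pm P_m)$.

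There is, however, a genuine gap in the cone-invariance argument (3). You set $\widetilde B(u)=\sum_i \pi_i(u)Au_i$ where $u_i$ is a point with $U_i\subset U_{u_i}$, and you then assert that each contributing $Au_i$ lies in $\pm D_m^0$ whenever $u\in\pm D_m^0$. That does not follow. If $u\in D_m^0\cap E_m$ and $\pi_i(u)>0$, then $u\in U_i\subset U_{u_i}$, but the center $u_i$ may well lie in $S_m$ (or even in $-D_m^0$): $D_m^0$ is open, so points of $S_m$ on $\partial D_m^0$ have arbitrarily small balls spilling into $D_m^0$. For such $u_i$, Lemma \ref{Aop}(3) gives no information, and shrinking $U_u$ (as you propose) only helps when the \emph{center} $u$ already lies in $\pm D_m^0$; it does not prevent a cover element centered in $S_m$ from meeting $D_m^0$. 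Worse, a single $U_i$ can meet both $D_m^0$ and $-D_m^0$ without meeting $D_m^0\cap(-D_m^0)$, in which case no single representative can produce $Au_i$ in both cone neighborhoods simultaneously, so convexity of $\pm D_m^0$ does not rescue the argument. The paper resolves exactly this issue: after fixing the local radii $\gamma(u)$, it passes from the locally finite refinement $\mathcal V$ to a further refinement $\mathcal U$ obtained by splitting the problematic elements into $V\backslash D_m^0$ and $V\backslash(-D_m^0)$, which guarantees property \eqref{etoile} (any $U\in\mathcal U$ meeting both $\pm D_m^0$ also meets their intersection); it then chooses the representative $a_U$ \emph{inside} $U\cap(\pm D_m^0)$ (and in $U\cap D_m^0\cap(-D_m^0)$ when $U$ meets both), so that $A(a_U)\in\pm D_m^0$ by Lemma \ref{Aop}(3). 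To repair your sketch you should likewise (i) choose the representatives in the cover elements themselves rather than as centers of the larger balls, and (ii) refine the cover so that elements meeting both $D_m^0$ and $-D_m^0$ also meet $D_m^0\cap(-D_m^0)$, before selecting those representatives.
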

The proof of this lemma follows the lines of \cite{BL04}. We provide a sketch of the proof here for completeness.
\begin{proof}
We define $\Delta_1,\Delta_2:E_m\to\mathbb{R}$ as
\begin{equation}\label{deltazero}
\Delta_1(u)=\frac{1}{2}\|u-Au\|\,\,\textnormal{ and }\,\,\Delta_2(u)=\frac{a}{2(a+b)}(1+\|u\|^2)^{-1}\|u-Au\|.
\end{equation}
For any $u\in E_m$ we choose $\gamma(u)>0$ such that 
\begin{equation}\label{deltaun}
\|A(v)-A(w)\|<\min\big\{\Delta_1(v),\Delta_1(w),\Delta_2(v),\Delta_2(w)\big\}
\end{equation}
holds for every $v,w\in N(u):=\big\{z\in Y_m\,;\,\|z-u\|<\gamma(u)\big\}$.\\
Let $\mathcal{V}$ be a locally finite open refinement of $\big\{N(u)\,;\,u\in E_m\big\}$ and define
\begin{equation*}
\mathcal{V}^\star:=\big\{V\in\mathcal{V}\,;\, D_m^0\cap V\neq\emptyset,\,-D_m^0\cap V\neq\emptyset,\,-D_m^0\cap D_m^0\cap V\neq\emptyset \big\},
\end{equation*}
\begin{equation*}
\mathcal{U}:=\bigcup\limits_{V\in\mathcal{V}\backslash\mathcal{V}^\star}\big\{V\big\}\cup\bigcup\limits_{V\in\mathcal{V}^\star}\big\{V\backslash D_m^0,V\backslash(-D_m^0)\big\}.
\end{equation*}
By construction $\mathcal{U}$ is a locally finite open refinement of $\big\{N(u)\,;\, u\in E_m\big\}$ and has a property that any $U\in\mathcal{U}$ is such that
\begin{equation}\label{etoile}
U\cap D_m^0\neq\emptyset\text{ and }U\cap (-D_m^0)\neq\emptyset\Longrightarrow U\cap D_m^0\cap (-D_m^0)\neq\emptyset.
\end{equation}
Let $\big\{\Pi_U\,:\,U\in\mathcal{U}\big\}$ be the partition of unity subordinated to $\mathcal{U}$ defined by
\begin{equation*}
\Pi_U(u):=\frac{\alpha_U(u)}{\sum\limits_{v\in\mathcal{U}}\alpha_U(v)},\text{ where }\alpha_U(u)=dist\big(u,E_m\backslash U\big).
\end{equation*}
For any $u\in\mathcal{U}$ choose $a_U$ such that if $U\cap(\pm D_m^0)\neq\emptyset$ then $a_U\in U\cap(\pm D_m^0)$ \big(such an element exists in view of \eqref{etoile}\big). Define $B:E_m\to Y_m$ by 
\begin{equation*}
B(u):=\frac{1}{2}\big(H(u)-H(-u)\big),\quad\text{where } H(u)=\sum\limits_{U\in\mathcal{U}}\Pi_U(u)A(a_U).
\end{equation*}
We then conclude as in \cite{BL04} by using Lemma \ref{Aop}-(3), \eqref{deltazero}, \eqref{deltaun}, and \eqref{aa1}.
\end{proof}
\begin{rem}\label{rema}
Lemmas \ref{Aop} and \ref{Bop} imply that 
\begin{align*}
\big<\Phi_m'(u),u-B(u)\big>&\geq\frac{1}{8}\|u-B(u)\|^2 \text{ and}\\
\|\Phi_m'(u)\|\leq2(a+b)(1&+\|u\|^2)\|u-B(u)\|,\text{ for all } u\in E_m.
\end{align*}
\end{rem}
\begin{lem}\label{tech}
Let $c<d$ and $\alpha>0$. For all $u\in Y_m$ such that $\Phi_m(u)\in[c,d]$ and $\|\Phi'_m(u)\|\geq\alpha$, there exists $\beta>0$ such that $\|u-B(u)\|\geq\beta$.
\end{lem}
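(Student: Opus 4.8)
The plan is to read this off directly from the inequality $\|\Phi_m'(u)\|\leq 2(a+b)(1+\|u\|^2)\|u-B(u)\|$ recorded in Remark \ref{rema}, once one knows that $\|u\|$ remains bounded on the set under consideration.

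First I would observe that the hypothesis $\|\Phi_m'(u)\|\geq\alpha>0$ forces $u\notin K_m$, i.e. $u\in E_m$, so that $B(u)$ is defined and Remark \ref{rema} applies; in particular
\[
\alpha\leq\|\Phi_m'(u)\|\leq 2(a+b)\bigl(1+\|u\|^2\bigr)\|u-B(u)\|.
\]

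Next I would produce an a priori bound on $\|u\|$ valid on $\Phi_m^{-1}\bigl((-\infty,d]\bigr)$. Exactly as in the proof of Lemma \ref{akbk}(1) (integrate $(f_3)$ to get $F(x,u)\geq c_1|u|^\mu-c_2$, then use the equivalence of norms on the finite-dimensional space $Y_m$), one has $\Phi_m(u)\leq\tfrac{a}{2}\|u\|^2+\tfrac{b}{4}\|u\|^4-c_3\|u\|^\mu+c_4$ on $Y_m$ with $\mu>4$, whence $\Phi_m(u)\to-\infty$ as $\|u\|\to\infty$ in $Y_m$. Consequently the sublevel set $\Phi_m^{-1}\bigl((-\infty,d]\bigr)$ is bounded; fix $R_m>0$ with $\|u\|\leq R_m$ whenever $\Phi_m(u)\leq d$.

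Combining the two estimates, for every $u\in Y_m$ with $\Phi_m(u)\in[c,d]$ and $\|\Phi_m'(u)\|\geq\alpha$ we obtain $\|u-B(u)\|\geq\alpha\bigl(2(a+b)(1+R_m^2)\bigr)^{-1}=:\beta>0$, which is the asserted conclusion (note the lower bound $c$ plays no role; only $\Phi_m(u)\leq d$ is used). There is no genuine obstacle here: the one point requiring care is the boundedness of $\Phi_m^{-1}\bigl((-\infty,d]\bigr)$, but this is immediate from the coercivity-type estimate already established in Lemma \ref{akbk}(1), now applied on $Y_m$ in place of $Y_k$.
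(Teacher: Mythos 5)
There is a concrete error at the step where you obtain the a priori bound on $\|u\|$. You correctly show, via $(f_3)$ and norm equivalence on the finite-dimensional space $Y_m$, that $\Phi_m(u)\le\frac{a}{2}\|u\|^2+\frac{b}{4}\|u\|^4-c_3\|u\|^\mu+c_4$ with $\mu>4$, so that $\Phi_m(u)\to-\infty$ as $\|u\|\to\infty$ in $Y_m$. But you then conclude that the sublevel set $\Phi_m^{-1}\bigl((-\infty,d]\bigr)$ is bounded and assert that the lower bound $c$ plays no role. This is backwards: a functional tending to $-\infty$ at infinity has bounded \emph{super}level sets, not sublevel sets. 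In fact $\Phi_m^{-1}\bigl((-\infty,d]\bigr)$ \emph{contains} the complement of some large ball, since $\Phi_m(u)<d$ for $\|u\|$ large enough. What the coercivity estimate actually yields is that $\Phi_m(u)\ge c$ forces $c_3\|u\|^\mu\le\frac{a}{2}\|u\|^2+\frac{b}{4}\|u\|^4+c_4-c$, hence $\|u\|\le R_m$ for some $R_m$ depending on $c$ and $m$; it is the lower bound $c$, not the upper bound $d$, that does the work.

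Fortunately both $c$ and $d$ are part of the hypothesis, so the slip is fixable: replace the incorrectly-bounded sublevel set by the bounded superlevel set $\Phi_m^{-1}\bigl([c,+\infty)\bigr)\cap Y_m$, fix $R_m$ accordingly, and your final estimate $\|u-B(u)\|\ge\alpha\bigl(2(a+b)(1+R_m^2)\bigr)^{-1}=:\beta$ from Remark \ref{rema} goes through as written. Once corrected, this is a genuinely different and somewhat shorter route than the paper's, which proceeds by contradiction: assuming $\|u_n-Bu_n\|\to0$ along a sequence with $\Phi_m(u_n)\in[c,d]$ and $\|\Phi_m'(u_n)\|\ge\alpha$, it derives $b\bigl(\frac14-\frac1\mu\bigr)\|u_n\|^4\le|\Phi_m(u_n)|+\frac{2}{\mu}\bigl(a+b\|u_n\|^2\bigr)\|u_n\|\,\|u_n-Bu_n\|$ from an identity involving the operator $A$ together with $(f_3)$ and Lemma \ref{Bop}(2), deduces that $(\|u_n\|)$ is bounded, and then contradicts $\|\Phi_m'(u_n)\|\ge\alpha$ via Remark \ref{rema}. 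Your approach replaces that auxiliary identity with the coercivity of $\Phi_m$ on $Y_m$ already contained in Lemma \ref{akbk}(1); both finish with the same estimate from Remark \ref{rema}.
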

\begin{proof}
By the definition of the operator $A$, we have for any $u\in Y_m$
\begin{equation*}
\big(a+b\|u\|^2\big)\int_\Omega\nabla(Au)\nabla u dx=\int_\Omega uf(x,u)dx.
\end{equation*}
It follows that
\begin{multline*}
\Phi_m(u)-\frac{1}{\mu}\big(a+b\|u\|^2\big)\int_\Omega\nabla u\nabla (u-Au) dx=a\big(\frac{1}{2}-\frac{1}{\mu}\big)\|u\|^2\\
+b\big(\frac{1}{4}-\frac{1}{\mu}\big)\|u\|^4+\int_\Omega\big(\frac{1}{\mu}uf(x,u)-F(x,u)\big)dx
\end{multline*}
which implies, using $(f_3)$ and Lemma \ref{Bop}-(2), that
\begin{align}
\nonumber b\big(\frac{1}{4}-\frac{1}{\mu}\big)\|u\|^4&\leq|\Phi_m(u)|+\frac{1}{\mu}\big(a+b\|u\|^2\big)\|u\|\|u-Au\|\\
&\leq |\Phi_m(u)|+\frac{2}{\mu}\big(a+b\|u\|^2\big)\|u\|\|u-Bu\|.\label{deuxetoile}
\end{align}
Suppose that there exists a sequence $(u_n)\subset Y_m$ such that $\Phi_m(u_n)\in[c,d]$, $\|\Phi_m'(u_n)\|\geq\alpha$ and $\|u_n-Bu_n\|\to0$. By \eqref{deuxetoile} we see that $(\|u_n\|)$ is bounded. It follows from Remark \ref{rema} above that $\Phi'_m(u_n)\to0$, which is a contradiction.
\end{proof}
Now we verify the compactness condition for $\Phi$.
\begin{lem}\label{psnod}
$\Phi$ satisfies the $(PS)^\star_{nod}$ condition, that is:
\begin{itemize}
\item any Palais-Smale sequence of $\Phi_m$ is bounded and
\item any sequence $(u_{m_j})\subset X$ such that
\begin{equation*}
m_j\to\infty,\quad u_{m_j}\in V_{\frac{\mu_{m_j}}{2}}(S_{m_j}),\quad \sup\Phi(u_{m_j})<\infty, \quad \Phi'_{m_j}(u_{m_j})=0
\end{equation*}
has a subsequence converging to a sign-changing critical point of $\Phi$.
\end{itemize}
\end{lem}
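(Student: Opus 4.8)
The plan is to verify the two clauses of $(PS)^\star_{nod}$ separately; the first is routine, and the real work is in showing that the limiting function in the second clause is sign‑changing. For the first clause, let $(u_n)\subset Y_m$ satisfy $\sup_n|\Phi_m(u_n)|<\infty$ and $\Phi'_m(u_n)\to0$. Testing with $u_n$ and combining with the energy, one has
\[
\Phi_m(u_n)-\frac1\mu\big<\Phi'_m(u_n),u_n\big>=a\Big(\frac12-\frac1\mu\Big)\|u_n\|^2+b\Big(\frac14-\frac1\mu\Big)\|u_n\|^4+\int_\Omega\Big(\frac1\mu u_nf(x,u_n)-F(x,u_n)\Big)dx .
\]
Since $\mu>4$ the two coefficients are positive and, by $(f_3)$, the integrand is nonnegative, so the right–hand side is at least $a(\frac12-\frac1\mu)\|u_n\|^2$; the left–hand side being $\le C+o(1)\|u_n\|$, the sequence $(\|u_n\|)$ is bounded. (This is also clause (i) of $(A_4)$.)

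For the second clause, let $(u_{m_j})$ be as in the statement. Since $\Phi'_{m_j}(u_{m_j})=0$ and $u_{m_j}\in Y_{m_j}$, testing with $u_{m_j}$ gives $\big<\Phi'(u_{m_j}),u_{m_j}\big>=0$, so the identity above (now with $\Phi$ in place of $\Phi_m$) yields $\Phi(u_{m_j})\ge a(\frac12-\frac1\mu)\|u_{m_j}\|^2$; as $\sup_j\Phi(u_{m_j})<\infty$, the sequence is bounded in $X=H^1_0(\Omega)$. Passing to a subsequence, $u_{m_j}\rightharpoonup u$ in $X$ and $\|u_{m_j}\|^2\to t$ for some $t\ge0$; by the compact embeddings $H^1_0(\Omega)\hookrightarrow L^2(\Omega),L^p(\Omega)$ (available since $N\le3$ and $p<2^\star$) we have $u_{m_j}\to u$ in $L^2$ and in $L^p$, whence $f(\cdot,u_{m_j})\to f(\cdot,u)$ in $L^{p/(p-1)}$ by $(f_1)$ (Theorem A.2 in \cite{W}). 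Writing $\big<\Phi'_{m_j}(u_{m_j}),w\big>=0$ as $(a+b\|u_{m_j}\|^2)\big<u_{m_j},w\big>=\int_\Omega wf(x,u_{m_j})\,dx$ for $w\in Y_{m_j}$, and choosing $w=w_j$ with $w_j\in Y_{m_j}$ the orthogonal projection of an arbitrary fixed $\varphi\in X$ (so $w_j\to\varphi$ in $X$), I would pass to the limit term by term to obtain $(a+bt)\big<u,\varphi\big>=\int_\Omega\varphi f(x,u)\,dx$ for all $\varphi\in X$.

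Next I would upgrade weak convergence to strong convergence — this is where the nonlocal term is felt. Choosing $w=u_{m_j}$ in the equation for $u_{m_j}$ gives $(a+b\|u_{m_j}\|^2)\|u_{m_j}\|^2=\int_\Omega u_{m_j}f(x,u_{m_j})\,dx\to\int_\Omega uf(x,u)\,dx$, hence $(a+bt)t=\int_\Omega uf(x,u)\,dx$; comparing with the limit equation at $\varphi=u$, namely $(a+bt)\|u\|^2=\int_\Omega uf(x,u)\,dx$, and using $a+bt\ge a>0$, one gets $t=\|u\|^2$. Together with $u_{m_j}\rightharpoonup u$ this forces $u_{m_j}\to u$ strongly in $X$, and substituting $t=\|u\|^2$ back into the limit equation shows $\big<\Phi'(u),\varphi\big>=0$ for all $\varphi\in X$, i.e. $u$ is a critical point of $\Phi$.

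It remains to show $u$ changes sign, which I expect to be the main obstacle. Two facts are needed. First, every $v\in\pm P_{m_j}$ satisfies $\mathrm{dist}(v,S_{m_j})\ge\mu_{m_j}$, because any $w\in S_{m_j}$ has $\|w-v\|\ge\mathrm{dist}(w,\pm P_{m_j})\ge\mu_{m_j}$; hence the hypothesis $u_{m_j}\in V_{\mu_{m_j}/2}(S_{m_j})$ rules out $u_{m_j}\in P_{m_j}$ and $u_{m_j}\in -P_{m_j}$, so $u_{m_j}^+\ne0$ and $u_{m_j}^-\ne0$. Second, since $\Phi'_{m_j}(u_{m_j})=0$ means $u_{m_j}$ is a fixed point of the auxiliary operator $A$ on $Y_{m_j}$, I would run the computation inside the proof of Lemma \ref{Aop}(3) with $v=u=u_{m_j}$ and a fixed small $\varepsilon$: this yields $\|u_{m_j}^+\|^2\le C\,|u_{m_j}^+|_p^p\le C'\|u_{m_j}^+\|^p$ with $C,C'$ depending only on $a$, $\varepsilon$ and Sobolev constants, not on $m_j$. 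As $u_{m_j}^+\ne0$ and $p>2$, this gives $\|u_{m_j}^+\|\ge\delta_0$ for some $\delta_0>0$ independent of $j$, and likewise $\|u_{m_j}^-\|\ge\delta_0$. Since $w\mapsto w^\pm$ is continuous on $H^1_0(\Omega)$ and $u_{m_j}\to u$ strongly, $\|u^\pm\|=\lim_j\|u_{m_j}^\pm\|\ge\delta_0>0$, so $u$ is sign‑changing. The crux is exactly this last point: the sign‑changing information must survive the passage to the limit, and since the sets $V_{\mu_{m_j}/2}(S_{m_j})$ may shrink (as $\mu_{m_j}$ could tend to $0$), it cannot be transmitted through set membership but only through the $m$‑uniform lower bound $\|u_{m_j}^\pm\|\ge\delta_0$, whose availability hinges on every constant appearing in Lemma \ref{Aop} being independent of $m$.
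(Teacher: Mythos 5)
Your proof is correct and follows essentially the same Galerkin-type strategy as the paper: bound the sequence via $(f_3)$, use a test function in $Y_{m_j}$ built from the weak limit $u$ to circumvent the fact that $\Phi'_{m_j}(u_{m_j})=0$ is weaker than $\Phi'(u_{m_j})=0$, and get the uniform lower bound $\|u_{m_j}^\pm\|\ge\delta_0$ from $(f_1)$, $(f_2)$ and the Sobolev embedding. The only notable difference is in how strong convergence is extracted: the paper tests $\Phi'_{m_j}(u_{m_j})$ directly with $u_{m_j}-\Pi_{m_j}u$, which kills the nonlinear term in one step and gives $\|u_{m_j}\|\to\|u\|$ immediately (whence strong convergence by uniform convexity), whereas you first pass to a limit equation $(a+bt)\langle u,\varphi\rangle=\int_\Omega\varphi f(x,u)\,dx$ and then identify $t=\|u\|^2$ by comparing with the testing against $u_{m_j}$; this is a few lines longer but equally valid. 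On the other hand, you make explicit a point the paper leaves implicit: that $u_{m_j}$ is sign-changing follows from $u_{m_j}\in V_{\mu_{m_j}/2}(S_{m_j})$ together with $\mathrm{dist}(v,S_{m_j})\ge\mu_{m_j}$ for all $v\in\pm P_{m_j}$, so $u_{m_j}\notin\pm P_{m_j}$; this is a useful clarification. Your closing remark about why the lower bound $\delta_0$ must be $m$-uniform (since the tubes $V_{\mu_{m_j}/2}(S_{m_j})$ may shrink) correctly identifies the crux of the compactness argument.
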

\begin{proof}
For any $u\in Y_m$ we have, in view of $(f_3)$, 
\begin{align}
\nonumber\Phi_m(u)-\frac{1}{\mu}\big<\Phi_m'(u),u\big>&=a\big(\frac{1}{2}-\frac{1}{\mu}\big)\|u\|^2+b\big(\frac{1}{4}-\frac{1}{\mu}\big)\|u\|^4\\
\nonumber&\qquad\qquad\qquad+\int_\Omega\big(\frac{1}{\mu}uf(x,u)-F(x,u)\big)dx\\
&\geq a\big(\frac{1}{2}-\frac{1}{\mu}\big)\|u\|^2+b\big(\frac{1}{4}-\frac{1}{\mu}\big)\|u\|^4.\label{pss}
\end{align}
It then follows that any sequence $(u_n)\subset Y_m$ such that $\sup_n\Phi_m(u_n)<\infty$ and $\Phi_m'(u_n)\to0$ is bounded.

 Now let $(u_{m_j})\subset X$ be such that 
\begin{equation*}
m_j\to\infty,\quad u_{m_j}\in V_{\frac{\mu_{m_j}}{2}}(S_{m_j}),\quad \sup\Phi(u_{m_j})<\infty, \quad \Phi'_{m_j}(u_{m_j})=0.
\end{equation*}
In view of \eqref{pss} the sequence $(u_{m_j})$ is bounded. Hence, up to a subsequence, $u_{m_j}\rightharpoonup u$ in $X$ and $u_{m_j}\to u$ in $L^p(\Omega)$.\\
Observe that the condition $\Phi_{m_j}'(u_{m_j})=0$ is weaker than $\Phi'(u_{m_j})=0$. Therefore, the fact that $(u_{m_j})$ converges strongly, up to a subsequence, to $u$ in $X$ does not follow from the usual standard argument.\\
Let us denote by $\Pi_{m_j}:X\to Y_{m_j}$ the orthogonal projection. Then it is clear that $\Pi_{m_j}u\to u$ in $X$, as $m_j\to\infty$. We have
\begin{multline}\label{eee}
\big<\Phi'_{m_j}(u_{m_j}),u_{{m_j}}-\Pi_{m_j}u\big>=\big(a+b\|u_{m_j}\|^2\big)\big<u_{m_j},u_{m_j}-\Pi_{m_j}u\big>\\
-\int_\Omega \big(u_{m_j}-\Pi_{m_j}u\big)f(x,u_{m_j})dx.
\end{multline}
Since $(u_{m_j})$ is bounded, we deduce from $(f_1)$ that $\big(|f(x,u_{m_j})|_{p/p-1}\big)$ is bounded. We then obtain by using the H\"{o}lder inequality
\begin{equation*}
\big|\int_\Omega \big(u_{m_j}-\Pi_{m_j}u\big)f(x,u_{m_j})dx\big|\leq|u_{m_j}-\Pi_{m_j}u|_p|f(x,u_{m_j})|_{\frac{p}{p-1}}\to0.
\end{equation*}
Recalling that $\Phi'_{m_j}(u_{m_j})=0$, we deduce from \eqref{eee} that
\begin{equation*}
\big<u_{m_j},u_{m_j}-\Pi_{m_j}u\big>=\|u_{m_j}\|^2-\big<u_{m_j},u\big>+\big<u_{m_j},u-\Pi_{m_j}u\big>=\circ(1).
\end{equation*}
It then follows that $\|u_{m_j}\|\to \|u\|$ which implies, since $X$ is uniformly convex, that $u_{m_j}\to u$ in $X$. It is readily seen that $u$ is a critical point of $\Phi$.

To show that the limit $u$ is sign-changing, we first observe that
\begin{align*}
\big<\Phi'_{m_j}(u_{m_j}),u_{m_j}^\pm\big>=0\quad&\Leftrightarrow\quad \big(a+b\|u_{m_j}\|^2\big)\|u_{m_j}^\pm\|^2=\int_\Omega u_{m_j}^\pm f(x,u_{m_j})dx\\
&\Rightarrow\quad a\|u_{m_j}^\pm\|^2\leq \int_\Omega u_{m_j}^\pm f(x,u_{m_j}^\pm)dx.
\end{align*}
By using \eqref{feps} and the Sobolev embedding theorem, we obtain
\begin{equation*}
a\|u_{m_j}^\pm\|^2\leq \int_\Omega u_{m_j}^\pm f(x,u_{m_j}^\pm)dx\leq c\big(\varepsilon\|u_{m_j}^\pm\|^2+c_\varepsilon\|u_{m_j}^\pm\|^p\big),
\end{equation*} 
where $c>0$ is a constant. Since $u_{m_j}$ is sign-changing, $u_{m_j}^\pm$ are not equal to $0$. Choosing $\varepsilon$ small enough \big(for instance $\varepsilon<\frac{a}{2c}$\big), we see that $(\|u_{m_j}^\pm\|)$ are bounded below by strictly positive constants which do not depend on $m_j$. This implies that the limit $u$ of the sequence $(u_{m_j})$ is also sign-changing.
\end{proof}
We are now in the position of proving our main result.
\begin{proof}[Proof Theorem \ref{mainresult}]
By Lemmas \ref{akbk}, \ref{Bop}, \ref{tech}, and \ref{psnod}, and Remarks \ref{a2} and \ref{rema}, the conditions $(A_1)$, $(A_2)$, $(A_3)$ and $(A_4)$ of Theorem \ref{scft} are satisfied. It then suffices to apply Theorem \ref{scft} to conclude.
\end{proof}
%\section*{Conclusion}
%In this paper, we presented a simple approach to study multiple sign-changing solutions to a class of Kirchhoff type problems. %This seems to be the first paper in literature in that direction. 
%For clarity of our exposition, we limited ourselves to a problem on a bounded domain, but it can be extended to the case of the whole space by working with radially symmetric functions or by introducing a potential which implies a Rellich-Kondrashov compactness type result for the functional space \big(see e.g \cite{LiuHe12}\big). We believe that the abstract critical point theorem established in this paper applies to many other situations. An application to some classes of Schr\"{o}dinger-Poisson systems will appear elsewhere.

%\section*{Acknowledgement}
%This research was supported by The Fields Institute for Research in Mathematical Sciences and The Perimeter Institute for Theoretical Physics. %Research at Perimeter Institute is supported by the Government of Canada through Industry Canada and by the Province of Ontario through the %Ministry of Research and Innovation.

%\section*{References}

%
%
\end{document}